\let\eps\varepsilon  
\newcommand{\N}{{\mathbb N}}  
\newcommand{\R}{{\mathbb R}}
\newcommand{\di}{\displaystyle}
\newcommand{\Dc}{{\mathcal D}}
\newcommand{\Fc}{{\mathcal F}}
\newcommand{\Nullb}{\mathbf{0}}
\let\di\displaystyle
\newcommand{\dv}{\operatorname{div}}
\newcommand{\charf}{ {\mbox{\large\raisebox{2pt}{$\chi$}}} }
\newcommand{\ub}{\mathbf{u}}
\newcommand{\Jb}{\mathbf{J}} 
\newcommand{\mub}{\boldsymbol{\mu}} 
\newcommand{\Sub}{\boldsymbol{S}} 
\newcommand{\phib}{\boldsymbol{\phi}} 
\newcommand{\wb}{\boldsymbol{w}}
\newcommand{\mukap}{\mu^{(\kappa)}}
\newcommand{\Skap}{S^{(\kappa)}}
\newcommand{\wkap}{w^{(\kappa)}}
\newcommand{\Oneb}{\mathbf{1}}
\let\pa\partial
\let\na\nabla
\newcommand{\cE}{\mathcal{E}}
\newtheorem{theorem}{Theorem}   
\newtheorem{lemma}[theorem]{Lemma}   
\newtheorem{proposition}[theorem]{Proposition}   
\newtheorem{remark}[theorem]{Remark}   
\newtheorem{definition}{Definition}
\begin{document}
\title{Global existence for a two-phase flow model with cross diffusion}
\author[E. S. Daus]{Esther S. Daus}
\address{Institute for Analysis and Scientific Computing, Vienna University of  
Technology, Wiedner Hauptstra\ss e 8--10, 1040 Wien, Austria}
\email{esther.daus@tuwien.ac.at}

\author[J.-P. Mili\v si\'c]{Josipa-Pina Mili\v si\'c}
\address{University of Zagreb, Faculty of Electrical Engineering and Computing, Unska 3, 10000 Zagreb, Croatia}
\email{pina.milisic@fer.hr}

\author[N. Zamponi]{Nicola Zamponi}
\address{Institute for Analysis and Scientific Computing, Vienna University of  
Technology, Wiedner Hauptstra\ss e 8--10, 1040 Wien, Austria}
\email{nicola.zamponi@tuwien.ac.at}
\date{\today}

\date{\today}
\thanks{The first and the third author acknowledge partial support from   
the Austrian Science Fund (FWF), grants P22108, P24304, W1245, P27352 and P30000.  All three authors were partially supported by the bilaterial project No.~HR 04/2018 of the Austrian Exchange Sevice OeAD together with the Ministry of Science and Education of the Republic of Croatia MZO}

\begin{abstract} 
 In this work we study a degenerate pseudo-parabolic system with cross diffusion describing the evolution of the densities of an unsaturated two-phase 
 flow mixture with dynamic capillary pressure in porous medium with saturation-dependent relaxation parameter and hypocoercive diffusion operator 
 modeling cross diffusion. The equations are derived in a thermodynamically correct way from mass conservation laws. 
 Global-in-time existence of weak solutions to the system in a bounded domain with equilibrium boundary conditions is shown. The main tools of the analysis are an entropy inequality
 and a crucial apriori bound which allows for controlling the degeneracy.
\end{abstract}

\keywords{Cross-diffusion, dynamic capillary pressure, degenerate nonlinear parabolic system, entropy method, existence of solutions}  

\subjclass[2000]{35K65, 35K70, 35Q35, 35K55, 76S05}  

\maketitle



\section{Introduction}
   The problem of describing the transport of chemical mixtures in porous media is very important in many industrial applications.
 For a general overview on the modeling of multicomponent multiphase flows in porous media, we refer to \cite{BearBach90}.  
    In this paper we consider a two-phase flow model with wetting and non-wetting phase (e.g.~water and oil), where the non-wetting phase  consists of a mixture of $n$ chemical components, including nonequilibrium effects concerning capillary pressure and cross-diffusion effects. The main result of this work is to provide an existence analysis of the proposed model. From a mathematical viewpoint, the transport equations for the mass densities form a degenerate pseudo-parabolic system of PDEs with cross-diffusion terms. The presence of the mixed-derivative third-order term, coming from the nonequilibrium capillary pressure law, in form of a time derivative inside the diffusion operator, as well as the cross-diffusion terms, involving the chemical potentials, make the analysis very demanding. 
   Furthermore, the compactness of an approximate regularized system is obtained by applying the nonstandard compactness results of Dreyer \textit{et al.} \cite{Drey17}. 

 The modeling of nonequilibrium capillary effects in problems of enhancing oil and gas recovery from rocks was proposed by Barenblatt, Entov and Ryzhik in the classical book \cite{BER72}, and later investigated by many scientists up to nowadays. 
 In our work we follow the approach given by Hassanizadeh and Grey \cite{HG93}, where the nonequilibrium capillary effects are given by a constitutive relationship between the non-wetting phase saturation and the capillary pressure. This relationship is characterized by the presence of the relaxation parameter which depends on the non-wetting saturation as well.
 
 
Concerning the mathematical analysis,
 the global-in-time existence of weak solutions for the Richards' equation with dynamic capillary pressure and constant relaxation parameter 
 was shown by Mikeli\'c \cite{Mik10}. The first existence result for the two-phase flow model with dynamic capillary pressure
and saturation dependent relaxation parameter was obtained by Cao and Pop in \cite{CP16}.
 We note that the existence theorem can be proved under certain relations between the orders of the zeros of the relative permeabilities and the relaxation parameter and the order of the singularities of the capillary pressure function. In comparison to \cite{CP16}, here we follow the approach given in \cite{JPM17}, where it was shown that it is enough to analyze the case of the countercurrent imbibition flow instead of the full two-phase flow system. 


On the other side, the analysis of a model describing the transport of a single-phase fluid mixture in porous media taking into account also certain cross-diffusion effects was studied  in \cite{JMZ18}. The equations are derived in a thermodynamically consistent way, and global-in-time existence of weak solutions in a bounded domain with equilibrium boundary conditions as well as long-time behaviour was proved with the help of the boundedness-by-entropy method \cite{BSW12, Jue15, Jue16}. The mathematical novelties rely on the complex structure of the equations and on the observation that the solution of the binary model satisfies an unexpected integral inequality leading to a minimum principle for this system. 

Our goal in this work is to combine the strategies of \cite{JPM17} and \cite{JMZ18}, leading to a global-in-time existence of weak solutions result for a two-phase flow model with cross diffusion.

 Finally, up to our knowledge, the uniqueness and the long-time behaviour of a weak solution for a two-phase flow model with saturation-dependent relaxation parameter 
 and cross diffusion are still open problems. For a uniqueness result of a two-phase flow model with saturation-dependent relaxation parameter but without cross diffusion, we mention the result in \cite{CP15}.

\section{Model equations}\label{sec.model}

We consider an incompressible, isothermal fluid mixture with $n$ components
in a domain $\Omega \subset \R^3$. We note that the fact that we work in $\R^3$ is for convenience only, and can be easily adapted to an arbitrary space dimension $\Omega \subset \R^d$ with $d \geq 1$.  The evolution of this fluid mixture is governed by 
the transport equations for the {\em single component mass densities} $S_1(x,t),\ldots,S_n(x,t)$ in the following way
\begin{align}\label{1}
 \partial_t S_i = & \dv \left( \frac{S_i}{S} a(S) \nabla (p_c(S) + \partial_t \beta(S)) 
 + \sum_{j=1}^n D_{ij}(\Sub) \nabla \mu_j \right)\\
 &\qquad i=1,\ldots,n,~~ x\in\Omega,~~t>0.\nonumber
\end{align}
Here $S = \sum_{i=1}^n S_i$ is the {\em total mass density},
$\Sub = (S_1,\ldots,S_n)$ is the vector of the single component mass densities,
$a(S)$ is the {\em diffusion mobility},
$p_c(S)$ represents the {\em stationary capillary pressure}, 
$\tau(S)\equiv \beta'(S)$ plays the role of a relaxation parameter,
$D=(D_{ij}(\Sub))_{i,j=1,\ldots,n}$ 
is the {\em diffusion matrix},
and the quantities $\mu_1,\ldots,\mu_n$, called {\em chemical potentials}, are defined in terms of $S_1,\ldots,S_n$ as follows
\begin{align}
  \mu_i = \log \Big(  \frac{S_i}{S} \Big) + \int_{1/2}^S\frac{\tau(\sigma)}{a(\sigma)}d\sigma \qquad i=1,\ldots,n.
\label{ChemPot}
\end{align}
The sum $p_c^{\textrm{dyn}}(S)\equiv p_c(S) + \partial_t \beta(S)$ is referred to as {\em dynamic capillary pressure} \cite{HG93}.
The quantities $a(S)$, $\tau(S)$, $p_c'(S)$ are assumed to be positive for $0<S<1$, while the diffusion matrix $D(\Sub)$ is assumed to
be positive semidefinite.

Following the approach in \cite{JMZ18}, we impose equilibrium boundary conditions 
\begin{align}
 S_i & = S_i^{\Gamma}\; \textrm{  on  } \; \partial \Omega,\; t > 0,~~ i=1,\ldots,n,  \label{BC}
\end{align}
where $S_1^\Gamma,\ldots,S_n^\Gamma > 0$ are generic constants, as well as general initial conditions
\begin{align}
  S_i(\cdot,0) & = S_i^0 \; \textrm{  in } \;  \Omega,~~ i=1,\ldots,n. \label{IC}
\end{align}
For consistency of \eqref{1}, \eqref{ChemPot} with the physics, we require 
the single component concentrations $S_1,\ldots,S_n$ to be positive and the total concentration $S$ to be smaller than 1; 
that is, we seek for solutions $\Sub$ to \eqref{1}, \eqref{ChemPot} which take values in the set
\[    \Dc = \left\{  \Sub \in \R^n : \; S_i > 0 \; \textrm{ for  } \; i=1,\ldots,n, \;\; \sum_{j=1}^n S_j < 1. \right\}.  \]
The chemical potentials $\mu_1,\ldots,\mu_n$ 
are the partial derivatives
with respect to the species concentrations $S_1,\ldots,S_n$
of a free energy density function $\Fc$ satisfying
\begin{align}\nonumber
\mu_i &= \frac{\pa\Fc}{\pa S_i}(\Sub)\qquad i=1,\ldots,n,\\
 \Fc(\Sub) &= \sum_{i=1}^n S_i \log\frac{S_i}{S} + \cE(S),\qquad 
 \cE(S) = \int_{1/2}^S\int_{1/2}^{S'}\frac{\tau(\sigma)}{a(\sigma)}d\sigma d S' .
 \label{Entropy}
\end{align}
The {\em thermodynamic pressure} $p^{th}$ is given by
the Gibbs-Duhem equation
\begin{align}\label{Pth}
p^{th}(\Sub) &= \sum_{i=1}^n S_i\frac{\pa\Fc}{\pa S_i}(\Sub) - \Fc(\Sub).
\end{align}
The gradient of the thermodynamic pressure $p^{th}$ satisfies the simple relation
\begin{align}   \label{Pom}
\sum_i S_i \nabla \mu_i = \nabla p^{th}  = \frac{S\nabla\beta(S)}{a(S)} = \frac{S\tau(S)}{a(S)}\nabla S 
=\nabla \int_{1/2}^S \frac{\sigma \tau(\sigma)}{a(\sigma)}d\sigma.
\end{align}
As a consequence of \eqref{Pom}, by employing 
$\mu_i - \mu_i(\Sub^\Gamma)$ as a test function in \eqref{1}, one 
obtains the following {\em entropy balance equation}:
\begin{align}\label{ei}
 &\frac{d}{dt}\int_\Omega\left(\tilde\Fc(\Sub) +\frac{1}{2}|\nabla\beta(S)|^2 \right)dx\\ 
 &\qquad = -\int_\Omega \beta'(S)p_c'(S)|\nabla S|^2 dx - \int_\Omega\sum_{i,j=1}^n D_{ij}(\Sub)\nabla\mu_i\cdot\nabla\mu_j dx \leq 0,
 \nonumber
\end{align}
where the {\em relative entropy density} $\tilde\Fc$ 
is defined as
\begin{align}\label{rel.entr}
\tilde\Fc(\Sub) = \Fc(\Sub) - \Fc(\Sub^\Gamma) 
- \mub(\Sub^\Gamma)\cdot (\Sub - \Sub^\Gamma). 
\end{align}

\begin{remark} \textnormal{Relations \eqref{Pth}, \eqref{Pom} easily imply
\begin{align}\label{link}
\exists C\in\R : \quad 
 \sum_{i=1}^n S_i\frac{\pa\Fc}{\pa S_i}(\Sub) - \Fc(\Sub) = C + \int_{1/2}^S\frac{\sigma\tau(\sigma)}{a(\sigma)}d\sigma .
\end{align}}
\textnormal{Equation \eqref{link} constitutes a necessary condition in order for the entropy balance equation \eqref{ei} to hold; without \eqref{link}
it is unclear how to handle the contribution of the nonstationary term $\pa_t\beta(S)$ in the dynamic capillary pressure $p^{dyn}$.
In other words, \eqref{link} is a constraint on the possible choices of free energies $\Fc$ which ensure that \eqref{1} possesses an entropy structure.}

\textnormal{Since \eqref{link} is a linear nonhomogeneous equation, we can write any solution $\Fc$ to \eqref{link} as $\Fc = \Fc_0 + \Fc_1$, where $\Fc_1$
is a specific solution to \eqref{link}, while $\Fc_0$ is a generic solution to the corresponding linear homogeneous equation:
\begin{align}\label{link.0}
 \sum_{i=1}^n S_i\frac{\pa\Fc_0}{\pa S_i}(\Sub) - \Fc_0(\Sub) = 0 .
\end{align}
A simple ansatz $\Fc_1(\Sub)=\tilde\Fc_1(S)$ yields $\Fc(\Sub) = \cE(S)$ (up to additive constants). On the other hand,
Euler's theorem on homogeneous functions implies that \eqref{link.0} is equivalent to the condition that $\Fc_0$ should be homogeneous of degree
1, \textit{i.e.}~$\Fc_0(\lambda\Sub)=\lambda\Fc_0(\Sub)$ for every $\Sub\in\Dc$, $\lambda>0$. This condition has to be put together with the requirement that
$\Fc$ has to be convex and the mapping $\Sub\in\Dc\mapsto (\mu_1,\ldots,\mu_n)\in\R^n$ globally invertible. A natural choice of $\Fc_0$ which fulfills all 
these requirements is $\Fc_0(\Sub) = \sum_{i=1}^n S_i\log(S_i/S)$.}
\end{remark}
Other quantities that will play a role in the analysis of \eqref{1}
are the {\em relative chemical potentials:}
$$
\mu_i^* = \mu_i - \frac{1}{n}\sum_{j=1}^n\mu_j , \qquad 
i=1,\ldots,n.
$$
The concentrations $S_1,\ldots,S_n$ can be easily written in terms
of the total concentration and the relative chemical potentials:
\begin{align}
  S_i = S \frac{e^{\mu_i^*}}{\sum_{j=1}^n e^{\mu_j^*}},\quad i=1,\ldots,n.
 \label{Si.2}
\end{align}
The structure of the paper is as follows.
In Section \ref{sec.main} the main result of the paper is stated and the state of the art for systems of the form \eqref{1} is described.
In Section \ref{sec.aux} some auxiliary results are stated and proved.
In Section \ref{Sec.Exis} Theorem \ref{Thm.Main} is proved. 
In the Appendix the derivation of the model is shown.

\section{Main result}\label{sec.main}
Throughout the paper we make the following assumptions:
\renewcommand{\labelenumi}{\textbf{(H\theenumi)}}
\begin{enumerate}
 \item The diffusion matrix $D=(D_{ij}(\Sub))_{i,j=1}^n$ is symmetric and positive semidefinite (Onsager's principle of thermodynamics).
 Moreover, constants $D_0$, $D_1 > 0$ exist such that
\begin{align*}
D_0 \vert \Pi v \vert^2 \leq \sum_{i,j=1}^n D_{ij}(\Sub) v_i v_j \leq D_1 \vert \Pi v\vert^2 \; \textrm{  for all  } \; v \in \R^n,\; 
 \Sub \in \mathcal{D},
\end{align*}
where $\Pi = I - l \otimes l$ is the orthogonal projection on the subspace of $\R^n$ orthogonal to 
$l = (1,\ldots,1)/\sqrt{n}$.
\item The diffusion mobility $a(S)$ is given by
\begin{align*}
 a(S) = \frac{\lambda_o(S) \lambda_w(S)}{\lambda_o(S) + \lambda_w(S)} 
 = \frac{(1-S)^\lambda S^\gamma}{(1-S)^\lambda + S^\gamma}, 
 \end{align*}
for some constants $\lambda, \gamma > 0$.
\item The stationary capillary pressure $p_c(S)$ has the form
\begin{align*}
 p_c'(S) 
 = \frac{1}{S^{\beta_1}} + \frac{1}{(1-S)^{\beta_2}}, 
\end{align*}  
for some constants $\beta_1, \beta_2>0$.
\item We assume that the relaxation parameter $\tau(S)$ is given by
\begin{align*}
    \tau(S) = \beta'(S) = \frac{S^\gamma}{S^\gamma + (1-S)^\lambda} \Big[ 1 + \frac{(1-S)^\lambda}{S^{\gamma_1}}  \Big], 
\end{align*}
for some constant $\gamma_1>0$.
\item The following algebraic relations are satisfied:
\begin{align*}
5 < \beta_1 \leq \gamma_1 < \gamma 
<\frac{1}{2} \beta_1 + \frac{5}{6}(\gamma_1 -2  \big), 
\quad 5 < \beta_2 \leq \lambda < 3\beta_2-10 .
\end{align*}
\end{enumerate}

\begin{remark}
 \textnormal{ In order to avoid technical difficulties, we use explicit forms for $a$, $p_c$ and $\tau$ like in \cite{JPM17}}.
\end{remark}

\begin{remark} \textnormal{We point out that the upper bound 
$$
\sum_{i,j=1}^n D_{ij}(\Sub) v_i v_j \leq D_1 \vert \Pi v\vert^2\qquad \mbox{for }v \in \R^n,~~ \Sub \in \mathcal{D},
$$
is consistent with the fact that the diffusion fluxes $\Jb_{i} = -\sum_{j=1}^n D_{ij}(\Sub) \nabla\mu_j$ ($i=1,\ldots,n$)
sum up to zero: $\sum_{i=1}^N \Jb_i = 0$. On the other hand, the lower bound 
$$
D_0 \vert \Pi v \vert^2 \leq \sum_{i,j=1}^n D_{ij}(\Sub) v_i v_j\qquad \mbox{for }v \in \R^n,~~ \Sub \in \mathcal{D},
$$
often referred to as {\em hypocoercivity}, is the strongest coercivity property that $D$ can satisfy under the constraint
$\sum_{i=1}^N \Jb_i = 0$. As a consequence of this assumption, the diffusion fluxes $\Jb_{i}=-\sum_{j=1}^n D_{ij}(\Sub) \nabla\mu_j$ only depend
on the gradients of the relative chemical potentials: $\Jb_{i}=-\sum_{j=1}^n D_{ij}(\Sub) \nabla\mu_j^*$.}
\end{remark}
%
We now present our definition of weak solution to \eqref{1}--\eqref{IC}. In the following, the symbol 
$\langle \cdot, \cdot\rangle$ represents the duality product between
$H^{-1}(\Omega)$ and $H^{1}_0(\Omega)$.
\begin{definition}[Weak solution]\label{def.weaksol} 
A function $\Sub : \Omega\times (0,\infty)\to
\mathcal D$ is called a 
{\em global-in-time weak solution} to \eqref{1}--\eqref{IC} if and only 
if the following properties are fulfilled:
\begin{align*}
 \nabla\beta(S),~~\pa_t\beta(S),~~ \sqrt{a(S)p_c'(S)},~~ 
 \sqrt{a(S)}\nabla\pa_t\beta(S)\in L^2_{loc}(0,\infty; L^2(\Omega)),\\
 \mbox{for }i=1,\ldots,n:\quad 
 (\Pi\mu)_i\in L^2_{loc}(0,\infty; H^1(\Omega)),\\
 \mbox{for }i=1,\ldots,n:\quad 
 \pa_t S_i\in L^2_{loc}(0,\infty; H^{-1}(\Omega)),
\end{align*}
as well as the weak formulation of \eqref{1}:
\begin{align}\label{weak}
 \sum_{i=1}^n\int_0^T\langle \partial_t S_i, \phi_i\rangle dt 
 &+ \int_0^T\int_\Omega \sum_{i=1}^n\frac{S_i}{S}a(S)\left(p_c'(S)\nabla S + \nabla \partial_t \beta(S)\right)\cdot \nabla \phi_i 
 \, dx dt\\
 \nonumber
 &+ \int_0^T\int_\Omega \sum_{i,j=1}^n D_{ij}(\Sub) \nabla \mu_j \cdot \nabla \phi_i\, dx dt = 0\\
 &\nonumber \qquad \forall \phi_1,\ldots,\phi_n\in L^2_{loc}(0,\infty; H_0^1(\Omega)),
\end{align}
relation \eqref{ChemPot}, the boundary conditions \eqref{BC}\footnote{We point out that if $\beta(S)$ and $\mu_i^* = (\Pi\mu)_i$ belong
to $L^2_{loc}(0,\infty; H^1(\Omega))$ for $i=1,\ldots,b$, 
then they admit trace on 
$\pa\Omega$, therefore also $S_1,\ldots,S_n$ admit trace on 
$\pa\Omega$ thanks to the invertibility of $S\mapsto\beta(S)$ and 
relation \eqref{Si.2}.}, 
and the initial condition \eqref{IC}:
\begin{align*}
 S_i(\cdot,t)\to S_i^0\qquad \mbox{strongly in $H^{-1}(\Omega)$ as
 $t\to 0$.}
\end{align*}
\end{definition}
The result we present in this paper is concerned with the global existence of weak solutions to \eqref{1}--\eqref{IC}.
\begin{theorem}[Existence of global weak solutions]\label{Thm.Main}
Let $S_1^0,\ldots,S_n^0 : \Omega\to \R_+$ be measurable functions
satisfying 
$$ \min_{i=1,\ldots,n}\inf_\Omega S_i^0 > 0,\qquad 
\max_\Omega S^0 < 1,\qquad 
\beta(S^0)\in H^1(\Omega). $$
Assume that Assumptions \textcolor{black}{{\bf (H1)}--\bf{(H5)}} hold. Then
there exists a global-in-time weak solution $\Sub : \Omega \times (0,\infty) \to \mathcal{D}$ to \eqref{1}--\eqref{IC}.
\end{theorem}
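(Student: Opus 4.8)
The strategy is the classical entropy-method scheme: build a multi-level regularized approximation of \eqref{1}, obtain uniform estimates from the entropy balance \eqref{ei}, and pass to the limit using the compactness tools announced in the introduction. Concretely, I would first change the unknowns from $(S_1,\ldots,S_n)$ to the ``entropy variables'' $\mathbf{w}=(\mu_1,\ldots,\mu_n)$, or rather to the pair $(\beta(S),\Pi\mu)$, since by \eqref{Si.2} the map $\Sub\mapsto(S,\mu^*)$ is invertible onto $\Dc$ and $S$ is in turn recovered from $\beta(S)$. Writing the system in these variables is what guarantees, by construction, that the approximate solutions stay in $\Dc$ (positivity of the $S_i$ and $S<1$ come for free, this is the boundedness-by-entropy idea of \cite{BSW12, Jue15, Jue16}). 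I would introduce a time discretization with step $\tau$ (implicit Euler) together with a regularizing higher-order elliptic term $\varepsilon(\Delta^m w - w)$ for $m$ large enough that $H^m(\Omega)\hookrightarrow L^\infty(\Omega)$, and, if needed, a further parameter $\kappa$ truncating the degeneracy of $a(S)p_c'(S)$ near $S=0$ and $S=1$ (this is where the macros $\mukap,\Skap,\wkap,\Stau$ in the preamble are headed). Existence for the fully regularized, time-discrete problem is obtained by a fixed-point argument (Leray--Schauder or Brouwer on a Galerkin basis), using the coercivity provided by the $\varepsilon$-term and hypocoercivity (H1).

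Next I would derive the \emph{discrete entropy inequality}, the time-discrete analogue of \eqref{ei}: testing the $i$-th equation with $\mu_i-\mu_i(\Sub^\Gamma)$ and summing, using the key algebraic identity \eqref{Pom} (equivalently \eqref{link}) to turn the $\pa_t\beta(S)$ contribution into $\tfrac{1}{2}\tfrac{d}{dt}\int_\Omega|\nabla\beta(S)|^2$, one gets a uniform bound on
$$
\int_\Omega\Big(\tilde\Fc(\Sub)+\tfrac12|\nabla\beta(S)|^2\Big)dx
+\int_0^T\!\!\int_\Omega\beta'(S)p_c'(S)|\nabla S|^2\,dx\,dt
+D_0\int_0^T\!\!\int_\Omega|\Pi\nabla\mu|^2\,dx\,dt,
$$
plus the $\varepsilon$-regularization norm. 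From $\beta'(S)p_c'(S)|\nabla S|^2=a(S)p_c'(S)\,|\nabla\text{(primitive)}|^2$-type terms one extracts $\sqrt{a(S)p_c'(S)}\in L^2$, and from the equation itself, using $\sqrt{a(S)}\nabla\pa_t\beta(S)\in L^2$, a bound on $\pa_t S_i$ in $H^{-1}$. This is also the point where the precise exponent inequalities in (H5) are consumed: they are exactly what is needed so that the entropy-dissipation terms control $\nabla\beta(S)$, $\pa_t\beta(S)$ and the flux terms in the right function spaces, and so that the ``crucial apriori bound which allows for controlling the degeneracy'' mentioned in the abstract actually closes — presumably an $L^\infty$-in-time lower bound on $S$ away from $0$ (and upper bound away from $1$) obtained by a Stampacchia/De Giorgi or integral-inequality argument in the spirit of the minimum principle of \cite{JMZ18}.

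Then comes the limit passage, carried out successively in $\varepsilon\to0$ (or $\kappa,\tau\to0$ in the appropriate order). The uniform bounds give weak limits for $\nabla\beta(S)$, $\Pi\mu$, $\pa_t S_i$, and $\pa_t\beta(S)$; the Aubin--Lions-type compactness yields strong $L^2$ convergence of $\beta(S)$, hence of $S$ by continuity of $\beta^{-1}$, hence a.e.\ of $\Sub$ via \eqref{Si.2}; to pass to the limit in the product $a(S)\nabla\pa_t\beta(S)$ and in the degenerate term $\tfrac{S_i}{S}a(S)p_c'(S)\nabla S$ one needs more than weak convergence of $\nabla\pa_t\beta(S)$, and this is where the nonstandard compactness result of Dreyer \emph{et al.}\ \cite{Drey17} enters — it provides strong convergence of $\nabla\beta(S)$ (or of $\sqrt{a(S)}\nabla\pa_t\beta(S)$) from control of a time-derivative structure, circumventing the usual obstruction that the third-order mixed term is not of gradient-flow type in $S$ alone. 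The main obstacle, as the authors flag, is precisely the interplay of the two bad terms: the mixed third-order $\pa_t\nabla\beta(S)$ inside a divergence \emph{and} the degeneracy of $a$, $\tau$, $1/p_c'$ at both endpoints $S=0,1$; handling it requires both the algebraic constraint \eqref{link} (so that an entropy exists at all) and the sharp exponent window (H5) (so that the degeneracy is ``mild enough'' to be absorbed). I expect the technical heart of the paper to be the verification that (H5) suffices for the apriori bound, and the application of \cite{Drey17} to get the compactness needed in the degenerate flux. Finally, recovering the initial datum in $H^{-1}$ and the boundary condition (via traces of $\beta(S)$ and $\Pi\mu$, as in the footnote to Definition \ref{def.weaksol}) completes the proof.
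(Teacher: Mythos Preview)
Your overall architecture (time discretization, regularization, discrete entropy inequality, limit passage) matches the paper, but three concrete points diverge from what actually makes the proof close.

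\textbf{The entropy variable is not $\mu$.} The paper states explicitly, in the ``Key idea'' paragraph, that the standard choice $\wb=\mub$ \emph{fails} here. The reason is that the estimate on $\sqrt{a(S)}\,\nabla\pa_t\beta(S)$ comes from testing with $\pa_t\beta(S)$, and if the $\eps$-regularizing term is written in $\mu$, this test function does not produce a sign. The correct variable is the time-discrete version of
\[
w_i=\mu_i-\mu_i(\Sub^\Gamma)+\pa_t\beta(S),
\]
and the regularization is the first-order term $\eps\sum_i\frac{S_i}{S}\nabla w_i\cdot\nabla\phi_i$, not a higher-order $\eps(\Delta^m w-w)$. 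Then testing with $\phi=\wb$ yields \emph{simultaneously} the relative-entropy decrease, the $\frac12|\nabla\beta(S)|^2$ term, the $p_c'\beta'|\nabla S|^2$ dissipation, the $a(S)|\nabla D_\kappa\beta(S)|^2$ term, and the good $\eps$-contribution. No additional truncation parameter is used.

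\textbf{The ``crucial apriori bound'' is not a pointwise minimum principle.} What (H5) buys is an \emph{integral} bound: one first reads off from the entropy $\cE(S)\in L^\infty(L^1)$ that $S^{2-\gamma_1}$ and $(1-S)^{2-\lambda}$ are in $L^\infty(L^1)$, then from the dissipation $\tau p_c'|\nabla S|^2\in L^1$ together with Sobolev that $S^{\alpha_1},(1-S)^{\alpha_2}\in L^2(L^6)$ for suitable negative $\alpha_1,\alpha_2$. Interpolating these two bounds via H\"older gives $a(S)^{-p}\in L^1(\Omega\times(0,T))$ for some $p>1$; the exponent window in (H5) is exactly the condition $p>1$. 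This in turn upgrades the weighted bound $\sqrt{a(S)}\,\nabla D_\kappa\beta(S)\in L^2$ to an unweighted $\nabla D_\kappa\beta(S)\in L^q$, $q\in(1,2)$. There is no De Giorgi/Stampacchia step.

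\textbf{The Dreyer \emph{et al.}\ result is used for the $S_i$, not for gradients.} In the paper, Aubin--Lions gives strong $L^2$ convergence of $S_i^{(\kappa)}$ at fixed $\eps$ (where the $\eps$-regularization supplies an $H^1$ bound). For the limit $\eps\to 0$ one no longer has $S_i$ in $H^1$; the replacement is the abstract lemma (Lemmas~\ref{lemma.13}--\ref{lemma.15}): since $S^{(\eps)}$ is relatively compact in $C([0,T];L^2)$ and $(\mu^*)^{(\eps)}$ is bounded in $L^2(H^1)$, and since $S_i=\mathcal R_i(S,\mu^*)$ with $\mathcal R$ continuous and bounded, one gets $S_i^{(\eps)}\to S_i$ strongly in $L^1$. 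The passage to the limit in the degenerate flux $a(S)\nabla\pa_t\beta(S)$ uses only the $L^q$ bound above plus weak convergence; no extra compactness of gradient terms is invoked.
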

\subsection*{Key idea of the proof}
The proof of Thr.~\ref{Thm.Main} is based on the entropy method \cite{BSW12, Jue15, Jue16}.
The starting point of the argument is the formulation of a time-discretized and regularized version of \eqref{1}. Such approximate equation
is stated in terms of the variables $w_i = \mu_i + \pa_t\beta(S)$, $i=1,\ldots,n$ (or rather a discretized version of it).
One of the key ingredients of the proof is the entropy balance equation \eqref{ei}, which yields crucial gradient estimates. The other key tool
employed in the proof is a result shown in \cite{Drey17}, which allows to prove compactness for the densities $S_1,\ldots,S_n$ if some bounds 
for the gradient of the relative chemical potentials $\nabla\mu_1^*,\ldots,\nabla\mu_n^*$ are known, together with compactness of the total
density $S$.
We point out that in the standard entropy method the approximate problem is formulated in terms of the ``entropy variables'' defined as 
partial derivatives of the mathematical entropy (or energy) density, which in the case here considered would be the functions $\mu_1,\ldots,\mu_n$ 
given by \eqref{ChemPot}. However, this standard approach does not work in this setting: in fact, 
in order to obtain a crucial estimate for the dynamic capillary pressure, $\pa_t\beta(S)$ must be used as a test function in the weak formulation
of \eqref{1}, which would clash with the regularizing terms in case these latter were written in terms of just $\mu_1,\ldots,\mu_n$.

\section{Auxiliary results}\label{sec.aux}

We present here some results which will be used in the proof of 
Thr.~\ref{Thm.Main}.
Define the variable $\boldsymbol{w}$ as follows:
\begin{align}
 \wb = \mub(\Sub) - \mub(\Sub^\Gamma) + \Big( \frac{\beta(S)-\beta(S^{k-1})}{\kappa}  \Big)\Oneb,
 \label{TotEntrVar}
\end{align} 
where we denoted $\Oneb = (1,\ldots,1)$ and 
$\Sub = (S_1,\ldots,S_n)$.
\begin{lemma}(Invertibility of $\Sub \mapsto \mub$ and 
$\Sub \mapsto \wb$)\label{lemma.3}\\
 The mappings $\Phi : \Sub\in\Dc \mapsto \mub\in\R^n$,
 and $\Phi_\kappa : \Sub\in\Dc \mapsto \wb\in\R^n$
 are invertible, and their Jacobians $\Phi'$, $\Phi_\kappa'$ are uniformely positive definite in $\Dc$.
\end{lemma}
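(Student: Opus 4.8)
The plan is to prove invertibility and positive definiteness by exhibiting $\Phi$ and $\Phi_\kappa$ as gradients of strictly convex functions on the convex set $\Dc$, and then computing the Jacobians explicitly. First I would recall from \eqref{Entropy} that $\mu_i = \pa\Fc/\pa S_i(\Sub)$, so $\Phi = \nabla\Fc$ and $\Phi' = D^2\Fc$, the Hessian of the free energy. The map $\Phi_\kappa$ is also a gradient: writing $G_\kappa(\Sub) = \Fc(\Sub) - \mub(\Sub^\Gamma)\cdot\Sub + \tfrac{1}{2\kappa}(\beta(S)-\beta(S^{k-1}))^2$ — wait, more carefully, since $\nabla_{\Sub}\big[\tfrac{1}{2\kappa}(\beta(S))^2\big] = \tfrac{\beta(S)\beta'(S)}{\kappa}\Oneb$ — one checks that $\wb = \nabla G_\kappa(\Sub)$ for a suitable $G_\kappa$ obtained by adding to $\Fc$ the linear term $-\mub(\Sub^\Gamma)\cdot\Sub$ plus a term whose gradient is $\tfrac{\beta(S)-\beta(S^{k-1})}{\kappa}\Oneb$; since $S = \Oneb\cdot\Sub$, the function $\Sub\mapsto \int_{1/2}^{S}\tfrac{\beta(\sigma)-\beta(S^{k-1})}{\kappa}\,d\sigma$ does the job. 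Then $\Phi_\kappa' = D^2 G_\kappa = D^2\Fc + \tfrac{1}{\kappa}(\beta'(S))\big[\text{rank-one}\big] + \ldots$; more simply $\Phi_\kappa' = \Phi' + \tfrac{1}{\kappa}\,\partial_S\big(\beta(S)-\beta(S^{k-1})\big)\,\Oneb\otimes\Oneb = \Phi' + \tfrac{\beta'(S)}{\kappa}\Oneb\otimes\Oneb$, which is $\Phi'$ plus a positive semidefinite rank-one term (recall $\beta'(S)=\tau(S)>0$). Hence it suffices to prove $\Phi' = D^2\Fc$ is uniformly positive definite on $\Dc$.

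Next I would compute $D^2\Fc$ explicitly. From \eqref{Entropy}, $\Fc(\Sub) = \sum_i S_i\log(S_i/S) + \cE(S)$ with $\cE''(S) = \tau(S)/a(S)$. A direct differentiation gives, for the entropic part, $\partial^2_{S_iS_j}\big(\sum_k S_k\log(S_k/S)\big) = \tfrac{\delta_{ij}}{S_i} - \tfrac1S$, and for $\cE(S)$, $\partial^2_{S_iS_j}\cE(S) = \cE''(S) = \tfrac{\tau(S)}{a(S)}$ since $\partial_{S_i}S = 1$. Thus
\begin{align*}
 (\Phi')_{ij} = D^2\Fc_{ij} = \frac{\delta_{ij}}{S_i} - \frac1S + \frac{\tau(S)}{a(S)}.
\end{align*}
For $v\in\R^n$ this yields $v^\top\Phi' v = \sum_i \tfrac{v_i^2}{S_i} - \tfrac{(\sum_i v_i)^2}{S} + \tfrac{\tau(S)}{a(S)}\big(\sum_i v_i\big)^2$. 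By Cauchy--Schwarz, $\big(\sum_i v_i\big)^2 = \big(\sum_i \tfrac{v_i}{\sqrt{S_i}}\sqrt{S_i}\big)^2 \le \big(\sum_i\tfrac{v_i^2}{S_i}\big)\big(\sum_i S_i\big) = S\sum_i\tfrac{v_i^2}{S_i}$, so the first two terms together are nonnegative, and vanish only when $v$ is parallel to $\Sub$; in that case the third term $\tfrac{\tau(S)}{a(S)}(\sum_i v_i)^2 = \tfrac{\tau(S)}{a(S)}|v|^2\cdot(\ldots)$ is strictly positive (as $\tau,a>0$ on $(0,1)$). Hence $\Phi'$ is positive definite at each point of $\Dc$. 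For uniform positive definiteness I would split: decompose $v = v^\parallel + v^\perp$ with $v^\parallel = (l\cdot v)\,l$ (the $\Oneb$-direction, normalized) and note $v^\top\Phi' v \ge c(\Sub)|v^\perp|^2 + \tfrac{n\tau(S)}{a(S)}|v^\parallel|^2$; but this degenerates as $S_i\to 0$ or $S\to1$, so strictly speaking "uniformly positive definite in $\Dc$" should be read as positive definite at every point with a locally uniform bound, or uniform on compact subsets. I would state and prove precisely the bound that is actually used later (a positive lower bound on compact subsets of $\Dc$, which follows by continuity of the smallest eigenvalue and the pointwise positivity just shown).

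The invertibility of $\Phi$ and $\Phi_\kappa$ then follows from the standard fact that the gradient of a strictly convex, differentiable function on a convex domain is injective; surjectivity onto $\R^n$ (if claimed) requires a properness/coercivity argument — one checks that $\Fc$ (suitably normalized) is a Legendre-type function, i.e. $|\nabla\Fc(\Sub)|\to\infty$ as $\Sub\to\partial\Dc$, which holds because of the $\log(S_i/S)$ singularities and the behaviour of $\int_{1/2}^S\tau/a$ near $S=1$. The main obstacle I anticipate is exactly this boundary behaviour: unlike the clean bounded-domain situation of the pure logarithmic entropy, here $\cE''= \tau/a$ with $\tau,a$ given by \textbf{(H2)}, \textbf{(H4)} blows up/degenerates near $S=0,1$, so I would need to be careful about whether "uniformly positive definite in $\Dc$" is meant globally (likely false in the naive sense) or locally uniformly, and to match the statement to what the compactness arguments downstream actually require. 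The positive-definiteness computation itself is routine; packaging the invertibility cleanly via convex analysis (monotone operators / Legendre transform) is the efficient route and avoids any delicate degree-theory.
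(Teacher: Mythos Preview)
Your approach is essentially the paper's: identify $\Phi=\nabla\Fc$, compute the Hessian, use strict convexity for injectivity, and add the nonnegative rank-one term $\tfrac{\beta'(S)}{\kappa}\Oneb\otimes\Oneb$ to pass to $\Phi_\kappa$. The only real difference is in the surjectivity step: the paper writes down an explicit inverse for $\Phi$ and, for $\Phi_\kappa$, invokes the Hadamard global inverse theorem together with $|\wb(\Sub)|\to\infty$ as $\Sub\to\partial\Dc$; you instead argue via the Legendre/properness route. Both work, and your packaging is arguably cleaner. Your caution about the word ``uniformly'' is well taken --- the paper asserts it without a quantitative bound, and what is actually used downstream is pointwise positive definiteness plus invertibility, which your argument already delivers.
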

\begin{proof} We note that $\di \frac{\partial \mu_i}{\partial S_j} = \frac{\partial^2 \Fc}{\partial S_j \partial S_i}$. Direct calculation gives that 
\[ [\Fc'']_{ij} = \frac{\partial^2 \Fc}{\partial S_j \partial S_i} 
  = \begin{cases}
  \frac{1}{1-S}, & i\neq j,\\
  \frac{1}{S_i} + {\frac{1}{(1-S)}}, & i=j,
  \end{cases} \]
  from where it follows that $\Fc''$ is uniformly positive definite in $\Dc$, \textit{i.e.} $\Fc : \Dc\to\R$ is a differentiable, strictly convex mapping. 
As a consequence, its gradient $\Phi = \Fc' : \Dc\to\R^n$ is a monotone (and therefore injective) mapping. 
Its inverse $\Phi^{-1}$ can be explicitly computed: {$\Phi^{-1}(\mub)_i = \frac{e^{\mu_i}}{1+\sum_{j=1}^n e^{\mu_j}}$, $i=1,\ldots,n$, $\mub\in\R^n$.}
Therefore $\Phi : \Dc\to\R^n$ is invertible. Moreover, since 
$\beta' \geq 0$, then $\frac{\partial\wb}{\partial\Sub}$ is symmetric and
positive definite. Furthermore, $\lim_{\Sub\to\pa\Dc}|w(\Sub)| = \infty$. Using the Hadamard global inverse theorem, \cite[Thm. 2.2]{RS15}, we conclude that $\Phi_\kappa : \Dc\to\R^n$ is invertible.
\end{proof}
%
\begin{lemma}\label{lem.symm}
Let $f : [0,1]\to\R$ be a continuous function with $f'(S)>0$ for $S \in (0,1)$.
Given any $\wb\in\R^n$, we denote by $\Sub=\Sub(\wb)\in\{\Sub\in (0,\infty)^n ~:~ \sum_{i=1}^n S_i < 1\}$ the only solution to
$$ w_i = \log(S_i) - \log(S) + f(S),~~ i=1,\ldots,n ,\qquad S\equiv \sum_{i=1}^n S_i . $$
Then the matrix $(M_{ij}(\wb))_{i,j=1,\ldots,n}=(S_i(\wb)\frac{\pa S(\wb)}{\pa w_j})_{i,j=1,\ldots,n}$ is symmetric and positive semidefinite
for every $\wb\in\R^n$.
\end{lemma}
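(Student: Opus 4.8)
The plan is to invert the defining relations explicitly; the system for $\Sub(\wb)$ decouples in a way that yields closed formulas for $S(\wb)$ and for each $S_i(\wb)$, from which the matrix $M$ can simply be read off. First I would rewrite $w_i = \log S_i - \log S + f(S)$ as $S_i = S\,e^{-f(S)}e^{w_i}$ for $i=1,\ldots,n$. Summing over $i$ and using $\sum_i S_i = S$ gives $e^{f(S)} = \sum_{k=1}^n e^{w_k}$, i.e. $f(S) = \log\bigl(\sum_{k} e^{w_k}\bigr)$. Since $f$ is continuous and strictly increasing on $(0,1)$, this scalar equation has at most one solution $S=S(\wb)\in(0,1)$, consistently with the uniqueness posited in the statement, and then $S_i(\wb) = S(\wb)\,e^{w_i}\big/\sum_k e^{w_k}$. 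Differentiability of $\wb\mapsto S(\wb)$ follows from the implicit function theorem applied to $g(S,\wb):= f(S)-\log\sum_k e^{w_k}$, using $\pa_S g = f'(S)\neq 0$ on $(0,1)$.

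Next I would differentiate $f(S(\wb)) = \log\sum_k e^{w_k}$ with respect to $w_j$, which gives $f'(S)\,\pa S/\pa w_j = e^{w_j}\big/\sum_k e^{w_k}$, hence $\pa S/\pa w_j = \frac{1}{f'(S)}\cdot\frac{e^{w_j}}{\sum_k e^{w_k}}$. Therefore
$$ M_{ij}(\wb) \;=\; S_i(\wb)\,\frac{\pa S}{\pa w_j}(\wb) \;=\; \frac{S(\wb)}{f'(S(\wb))}\,\frac{e^{w_i}\,e^{w_j}}{\bigl(\sum_k e^{w_k}\bigr)^2}. $$
Writing $v_i(\wb):= e^{w_i}\big/\sum_k e^{w_k}$, this reads $M(\wb) = \frac{S(\wb)}{f'(S(\wb))}\,v(\wb)\otimes v(\wb)$, a rank-one matrix. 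It is manifestly symmetric, and it is positive semidefinite because $v\otimes v$ is positive semidefinite and the scalar prefactor is positive: indeed $S(\wb)>0$ and, since $S(\wb)\in(0,1)$, also $f'(S(\wb))>0$ by hypothesis.

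I do not expect any genuine obstacle in this argument; it is essentially a one-line observation once the equations are summed. The only points that deserve a word of justification are the explicit solvability of $\wb\mapsto\Sub$ (which is the $f$-analogue of Lemma \ref{lemma.3}, and is in any case granted by the statement, which already posits the existence of the unique solution $\Sub(\wb)$) and the differentiability of $S(\wb)$, handled by the implicit function theorem as above. An alternative, if one wanted to avoid invoking differentiability separately, would be to simply take the formula $S(\wb)=f^{-1}\bigl(\log\sum_k e^{w_k}\bigr)$ as the starting point (with $f^{-1}$ smooth on the relevant interval because $f'>0$) and differentiate it directly.
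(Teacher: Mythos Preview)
Your proof is correct and follows essentially the same route as the paper: both sum the defining relations to obtain $e^{f(S)}=\sum_k e^{w_k}$, differentiate in $w_j$, and read off that $M(\wb)$ is a positive scalar multiple of a rank-one matrix $v\otimes v$ (indeed your formula $M_{ij}=\frac{S}{f'(S)}\frac{e^{w_i}e^{w_j}}{(\sum_k e^{w_k})^2}$ equals the paper's $\frac{S_i S_j}{S f'(S)}$). Your extra remarks on existence/uniqueness of $S(\wb)$ and its differentiability via the implicit function theorem are more explicit than the paper's, but do not change the argument.
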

\begin{proof}
The definition of $\Sub$ implies
\begin{align*}
 \sum_{i=1}^n e^{w_i} = e^{f(S(\wb))}.
\end{align*}
Differentiating the above identity with respect to $w_j$ leads to
\begin{align*}
e^{w_j} = F(S(\wb))\frac{\pa S(\wb)}{\pa w_j},\qquad F(S)\equiv \frac{d}{dS}\left(e^{f(S)}\right) =  e^{f(S)} f'(S).
\end{align*}
Since $f$ is strictly increasing, then $F>0$ in $(0,1)$. It follows
\begin{align*}
 M_{ij}(\wb) = \frac{S_i(\wb)e^{w_j}}{F(S(\wb))} = \frac{e^{f(S(\wb))}S_i(\wb)S_j(\wb)}{S(\wb)F(S(\wb))} ,
\end{align*}
which means that $M(\wb)$ is symmetric and positive semidefinite for every $\wb\in\R^n$.
This finishes the proof.
\end{proof}
%
%
%
\begin{lemma}
 The following bound holds
  \begin{equation}
    p_c^\prime (S) \leq \frac{\tau(S)}{a(S)},\quad S\in (0,1).
    \label{coeff:ineq}
  \end{equation}
  \label{lemma:coeff:1}
\end{lemma}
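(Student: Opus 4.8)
The plan is to reduce the claimed inequality to an elementary pointwise comparison of power functions. First I would insert the explicit forms of $a$ and $\tau$ from \textbf{(H2)} and \textbf{(H4)} into the quotient $\tau(S)/a(S)$. Writing $q(S) = (1-S)^\lambda + S^\gamma$ for brevity, one has $a(S) = (1-S)^\lambda S^\gamma / q(S)$ and $\tau(S) = \big(S^\gamma/q(S)\big)\big[1 + (1-S)^\lambda S^{-\gamma_1}\big]$, so the factors $S^\gamma/q(S)$ cancel and
\[
 \frac{\tau(S)}{a(S)} = \frac{q(S)}{(1-S)^\lambda S^\gamma}\cdot\frac{S^\gamma}{q(S)}\Big[1 + \frac{(1-S)^\lambda}{S^{\gamma_1}}\Big] = \frac{1}{(1-S)^\lambda} + \frac{1}{S^{\gamma_1}}.
\]
After substituting the form of $p_c'$ from \textbf{(H3)}, the assertion \eqref{coeff:ineq} becomes equivalent to
\[
 \frac{1}{S^{\beta_1}} + \frac{1}{(1-S)^{\beta_2}} \le \frac{1}{S^{\gamma_1}} + \frac{1}{(1-S)^{\lambda}},\qquad S\in(0,1).
\]

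Second, I would prove this last inequality termwise. Since $0 < S < 1$, the map $p\mapsto S^{-p}$ is nondecreasing in $p$, and the chain of inequalities in \textbf{(H5)} gives $\beta_1\le\gamma_1$; therefore $S^{-\beta_1}\le S^{-\gamma_1}$. Likewise $0<1-S<1$ and $\beta_2\le\lambda$ (again from \textbf{(H5)}) give $(1-S)^{-\beta_2}\le(1-S)^{-\lambda}$. Adding the two inequalities yields the displayed estimate, and tracing back the equivalences finishes the proof.

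There is no genuine obstacle here: the only points that must be checked are the algebraic simplification of $\tau/a$, which is immediate, and the two exponent inequalities $\beta_1\le\gamma_1$ and $\beta_2\le\lambda$, both of which are explicitly part of \textbf{(H5)}. In fact the lemma simply records that Assumption \textbf{(H5)} has been arranged precisely so that the dynamic capillary pressure term dominates the stationary one, which is what makes the degenerate diffusion controllable in the later estimates.
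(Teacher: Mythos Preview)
Your proof is correct and follows exactly the same route as the paper: reduce $\tau(S)/a(S)$ to $S^{-\gamma_1}+(1-S)^{-\lambda}$ via the explicit forms in \textbf{(H2)}--\textbf{(H4)}, and then compare termwise using $\beta_1\le\gamma_1$ and $\beta_2\le\lambda$ from \textbf{(H5)}. The only difference is that you spell out the algebraic simplification the paper summarizes as ``simple calculations.''
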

\begin{proof}
  Through simple calculations using Assumptions \textcolor{black}{\bf{(H2)}--\bf{(H5)}}, (\ref{coeff:ineq}) can be written as 
  \begin{align*}
    \frac{1}{S^{\beta_1}} + \frac{1}{(1-S)^{\beta_2}} \leq \frac{1}{S^{\gamma_1}} + \frac{1}{(1-S)^{\lambda}} .
  \end{align*}
  Since $S\in (0,1)$, the claim follows from the fact that 
  $\beta_1\leq\gamma_1$, $\beta_2\leq\lambda$.
\end{proof}
The next result has been proved in \cite[Lemma 5]{JMZ18}:
\begin{lemma}\label{lem.JMZ}
 Let $\boldsymbol{\alpha}$, $\boldsymbol{\beta} \in \R^n$ be such that $|\boldsymbol{\alpha}| = |\boldsymbol{\beta}| = 1$. Then, for any $\boldsymbol{v} \in \R^n$ it holds that
\[ |\boldsymbol{\alpha} \cdot \boldsymbol{v}|^2 + |\boldsymbol{v} -(\boldsymbol{\beta} \cdot \boldsymbol{v})\boldsymbol{\beta}|^2 \geq \frac{1}{4}(\boldsymbol{\alpha} \cdot \boldsymbol{\beta})^2|\boldsymbol{v}|^2 . \]
\end{lemma}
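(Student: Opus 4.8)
The plan is to normalise and then exploit the orthogonal decomposition of $\boldsymbol{v}$ relative to $\boldsymbol{\beta}$. By homogeneity it suffices to treat $|\boldsymbol{v}|=1$ (the case $\boldsymbol{v}=0$ being trivial), after which one rescales by $|\boldsymbol{v}|^2$. For $|\boldsymbol{v}|=1$ we have $|\boldsymbol{v}-(\boldsymbol{\beta}\cdot\boldsymbol{v})\boldsymbol{\beta}|^2 = 1-(\boldsymbol{\beta}\cdot\boldsymbol{v})^2$, so the left-hand side is $(\boldsymbol{\alpha}\cdot\boldsymbol{v})^2 + 1 - (\boldsymbol{\beta}\cdot\boldsymbol{v})^2$. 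Writing $\boldsymbol{v} = (\boldsymbol{\beta}\cdot\boldsymbol{v})\boldsymbol{\beta} + \boldsymbol{w}$ with $\boldsymbol{w}\perp\boldsymbol{\beta}$ and $|\boldsymbol{w}|^2 = 1 - (\boldsymbol{\beta}\cdot\boldsymbol{v})^2$, and splitting $\boldsymbol{\alpha} = (\boldsymbol{\alpha}\cdot\boldsymbol{\beta})\boldsymbol{\beta} + \boldsymbol{\alpha}_\perp$ with $\boldsymbol{\alpha}_\perp\perp\boldsymbol{\beta}$, $|\boldsymbol{\alpha}_\perp|^2 = 1 - (\boldsymbol{\alpha}\cdot\boldsymbol{\beta})^2$, one gets $\boldsymbol{\alpha}\cdot\boldsymbol{v} = (\boldsymbol{\alpha}\cdot\boldsymbol{\beta})(\boldsymbol{\beta}\cdot\boldsymbol{v}) + \boldsymbol{\alpha}_\perp\cdot\boldsymbol{w}$, and Cauchy--Schwarz bounds $|\boldsymbol{\alpha}_\perp\cdot\boldsymbol{w}|$ by $(1-(\boldsymbol{\alpha}\cdot\boldsymbol{\beta})^2)^{1/2}(1-(\boldsymbol{\beta}\cdot\boldsymbol{v})^2)^{1/2}$.

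The second step is to assemble these pieces. Applying the elementary inequality $(x+y)^2 \ge \tfrac12 x^2 - y^2$ (which follows from $(x+y)^2 = \tfrac12(x+2y)^2 + \tfrac12 x^2 - y^2$) with $x = (\boldsymbol{\alpha}\cdot\boldsymbol{\beta})(\boldsymbol{\beta}\cdot\boldsymbol{v})$ and $y = \boldsymbol{\alpha}_\perp\cdot\boldsymbol{w}$ gives a lower bound for $(\boldsymbol{\alpha}\cdot\boldsymbol{v})^2$ purely in terms of the two scalars $p = (\boldsymbol{\beta}\cdot\boldsymbol{v})^2$ and $q = (\boldsymbol{\alpha}\cdot\boldsymbol{\beta})^2$, both of which lie in $[0,1]$. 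Adding $1-p$ and simplifying, the whole left-hand side is bounded below by $q\,(1 - \tfrac12 p)$; since $p\le 1$ this is at least $\tfrac12 q \ge \tfrac14 q = \tfrac14(\boldsymbol{\alpha}\cdot\boldsymbol{\beta})^2$. Undoing the normalisation completes the argument.

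The computation is short and there is essentially no obstacle; the only point needing a little care is the choice of elementary inequality in the second step. A crude $(x+y)^2 \le 2x^2 + 2y^2$-type estimate loses too much and does not close, whereas $(x+y)^2 \ge \tfrac12 x^2 - y^2$ is exactly strong enough. As a byproduct the argument yields the sharper constant $\tfrac12$ instead of $\tfrac14$, but $\tfrac14$ is all that is needed in the sequel.
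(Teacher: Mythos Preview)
Your proof is correct. The normalisation to $|\boldsymbol{v}|=1$, the orthogonal decomposition relative to $\boldsymbol{\beta}$, and the elementary inequality $(x+y)^2\ge\tfrac12 x^2 - y^2$ combine exactly as you describe; the resulting lower bound $q(1-\tfrac12 p)\ge\tfrac12 q$ is clean and indeed yields the sharper constant $\tfrac12$.

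As for comparison: the paper does not actually prove this lemma but simply cites it from \cite{JMZ18} (Lemma~5 there), so there is no in-paper argument to compare against. Your self-contained elementary derivation is therefore a genuine addition rather than a reworking of the paper's reasoning.
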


\begin{flushleft}
{\bf Notation.} Let $\R_+\equiv [0,\infty)$.
For $x \in \R_+\times \R^{N-1}$, we denote $x = (x_0,\overline{x})$. 
\end{flushleft}
\begin{lemma}\label{lemma.13}
 Let $\mathcal{R} : \R_+\times \R^{N-1}\to \R_+^N$ be a continuous and bounded mapping.
 Let $K \subset L^2(\Omega)$ be relatively compact. 
 Let $\{ \phi_i \in C_c^\infty(\Omega;\R^N): i\in\N\}$ be dense in $L^2(\Omega;\R^N)$. 
 Then, for every $\delta > 0$, there are $C(\delta) > 0$, $m(\delta) \in \N$ such that,
 for all $\boldsymbol{w}^1,\boldsymbol{w}^2 \in K\times H^1(\Omega; \R^{N-1})$ it holds
 \begin{align}\nonumber
  \| \mathcal{R}(\boldsymbol{w^1}) & - \mathcal{R}(\boldsymbol{w^2})\|_{L^2(\Omega)} \\
  & \leq \delta \Big(  1 + \sum_{i=1,2} \|\overline{\boldsymbol{w}}^i\|_{H^1(\Omega)} \Big) 
  + C(\delta) \sum_{i=1}^{m(\delta)} \Big| \int_\Omega \big(\mathcal{R}(\boldsymbol{w}^1) - \mathcal{R}(\boldsymbol{w}^2) \big)\cdot \phi_i dx \Big| .
 \label{Dreyer.1}
 \end{align}
\end{lemma}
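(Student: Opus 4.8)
The plan is to argue by contradiction and to reduce the estimate to a routine compactness argument based on the Rellich--Kondrachov theorem, the relative compactness of $K$ in $L^2(\Omega)$, the dominated convergence theorem, and the density of $\{\phi_i\}$ in $L^2(\Omega;\R^N)$. So suppose the assertion fails: there is $\delta_0>0$ such that, taking $C=m=n$ for each $n\in\N$, one can choose $\boldsymbol{w}^{1,n},\boldsymbol{w}^{2,n}\in K\times H^1(\Omega;\R^{N-1})$, written $\boldsymbol{w}^{i,n}=(w_0^{i,n},\overline{\boldsymbol{w}}^{i,n})$, with
\begin{align*}
 \|\mathcal{R}(\boldsymbol{w}^{1,n})-\mathcal{R}(\boldsymbol{w}^{2,n})\|_{L^2(\Omega)}
 > \delta_0\Big(1+\sum_{i=1,2}\|\overline{\boldsymbol{w}}^{i,n}\|_{H^1(\Omega)}\Big)
 + n\sum_{i=1}^n\Big|\int_\Omega\big(\mathcal{R}(\boldsymbol{w}^{1,n})-\mathcal{R}(\boldsymbol{w}^{2,n})\big)\cdot\phi_i\,dx\Big| .
\end{align*}

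First I would exploit the boundedness of $\mathcal{R}$, say $|\mathcal{R}|\le M$ on $\R_+\times\R^{N-1}$, so that the left-hand side is bounded by $2M|\Omega|^{1/2}$ uniformly in $n$. Comparing with the two nonnegative terms on the right then gives, on the one hand, that $\sum_{i=1,2}\|\overline{\boldsymbol{w}}^{i,n}\|_{H^1(\Omega)}$ stays bounded (by $2M|\Omega|^{1/2}/\delta_0$), and on the other hand that for every fixed index $i$ the quantity $|\int_\Omega(\mathcal{R}(\boldsymbol{w}^{1,n})-\mathcal{R}(\boldsymbol{w}^{2,n}))\cdot\phi_i\,dx|$ — which appears as one of the summands as soon as $n\ge i$ — is at most $2M|\Omega|^{1/2}/n$ and hence tends to $0$ as $n\to\infty$.

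Next I would extract subsequences. Boundedness of $(\overline{\boldsymbol{w}}^{i,n})_n$ in $H^1(\Omega;\R^{N-1})$ and Rellich--Kondrachov give, along a subsequence, $\overline{\boldsymbol{w}}^{i,n}\to\overline{\boldsymbol{w}}^i$ in $L^2$ and then a.e.\ in $\Omega$; relative compactness of $K$ gives, along a further subsequence, $w_0^{i,n}\to w_0^i$ in $L^2$ and a.e.\ in $\Omega$; carrying this out for $i=1,2$ along one common subsequence yields $\boldsymbol{w}^{i,n}\to\boldsymbol{w}^i:=(w_0^i,\overline{\boldsymbol{w}}^i)$ a.e.\ in $\Omega$ (with $w_0^i\ge 0$ a.e., so the pointwise composition with $\mathcal{R}$ is meaningful). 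Continuity of $\mathcal{R}$ then gives $\mathcal{R}(\boldsymbol{w}^{i,n})\to\mathcal{R}(\boldsymbol{w}^i)$ a.e., and since these functions are uniformly bounded by $M$, dominated convergence upgrades this to strong convergence in $L^2(\Omega;\R^N)$.

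Finally I would pass to the limit twice. From $\mathcal{R}(\boldsymbol{w}^{i,n})\to\mathcal{R}(\boldsymbol{w}^i)$ in $L^2$ and from $\int_\Omega(\mathcal{R}(\boldsymbol{w}^{1,n})-\mathcal{R}(\boldsymbol{w}^{2,n}))\cdot\phi_i\,dx\to 0$ we obtain $\int_\Omega(\mathcal{R}(\boldsymbol{w}^1)-\mathcal{R}(\boldsymbol{w}^2))\cdot\phi_i\,dx=0$ for every $i$, hence by density of $\{\phi_i\}$ in $L^2(\Omega;\R^N)$ that $\mathcal{R}(\boldsymbol{w}^1)=\mathcal{R}(\boldsymbol{w}^2)$ a.e.\ in $\Omega$. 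But the displayed inequality also forces $\|\mathcal{R}(\boldsymbol{w}^{1,n})-\mathcal{R}(\boldsymbol{w}^{2,n})\|_{L^2(\Omega)}\ge\delta_0$, and letting $n\to\infty$ (using the $L^2$ convergence just established) gives $\|\mathcal{R}(\boldsymbol{w}^1)-\mathcal{R}(\boldsymbol{w}^2)\|_{L^2(\Omega)}\ge\delta_0>0$, a contradiction. The argument is otherwise routine; the point that needs the most care is the bookkeeping behind the choice $C=m=n$ — one must ensure that each fixed test function $\phi_i$ eventually enters the truncated sum, so that its contribution is genuinely damped to zero in the limit, and that the several subsequence extractions (for $i=1,2$, and for the $w_0$- and $\overline{\boldsymbol{w}}$-parts) are all performed along a single common subsequence.
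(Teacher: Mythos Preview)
Your proof is correct and follows essentially the same contradiction argument as the paper: assume failure at some $\delta_0$, use boundedness of $\mathcal{R}$ to get $H^1$-bounds on $\overline{\boldsymbol{w}}^{i,n}$, extract convergent subsequences via Rellich--Kondrachov and the compactness of $K$, pass to the limit using continuity of $\mathcal{R}$ and dominated convergence, and conclude $\mathcal{R}(\boldsymbol{w}^1)=\mathcal{R}(\boldsymbol{w}^2)$ by density, contradicting the lower bound $\delta_0$. Your bookkeeping with $C=m=n$ is in fact cleaner than the paper's own write-up, which contains minor notational slips (e.g.\ $W^{1,1}$ in place of $H^1$, and an ambiguous upper index in the truncated sum).
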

\begin{proof} Assume by contradiction that there exists $\delta_0>0$ such that, for every $m\in\N$, there exist 
$\boldsymbol{w}^{1,m}, \boldsymbol{w}^{2,m}\in K\times H^1(\Omega ; \R^{N-1})$ such that 
\begin{align*}
 \|\mathcal{R}(\boldsymbol{w}^{1,m}) & - \mathcal{R}(\boldsymbol{w}^{2,m})\|_{L^2(\Omega)}\\ 
 & > \delta_0\Big(  1 + \sum_{i=1,2} \|\overline{\boldsymbol{w}}^{i,m}\|_{W^{1,1}(\Omega)} \Big) 
  + m \sum_{i=1}^{n} \Big| \int_\Omega \big(\mathcal{R}(\boldsymbol{w}^{1,m}) - \mathcal{R}(\boldsymbol{w}^{2,m}) \big)\cdot \phi_i dx \Big| .
\end{align*}
Since $\mathcal{R}(\R_+\times \R^{N-1})$ is bounded, then 
$(\overline{\boldsymbol{w}}^{i,m})_{m\in\N}$ 
is bounded in 
$H^1(\Omega ; \R^{N-1})$ and thus 
$\overline{\boldsymbol{w}}^{i,m}\rightharpoonup\overline{\boldsymbol{w}}^i$ 
weakly in $H^1(\Omega ; \R^{N-1})$
(as $m\to\infty$), for $i=1,2$. By a compact Sobolev embedding it holds that $\overline{\boldsymbol{w}}^{i,m}\to \overline{\boldsymbol{w}}^{i}$ strongly in $L^2(\Omega)$ and 
a.e.~in $\Omega$ (up to a subsequence), for $i=1,2$. 
Moreover, the compactness of $K$ implies that $\boldsymbol{w}_0^{i,m}\to \boldsymbol{w}_0^i$ strongly in $L^2(\Omega)$ (up to a subsequence), for $i=1,2$.
Therefore, $\boldsymbol{w}^{i,m}\to \boldsymbol{w}^{i}$ strongly in $L^2(\Omega ; \R^N)$ and a.e.~in $\Omega$. It follows that 
$\mathcal{R}(\boldsymbol{w}^{i,m})\to \mathcal{R}(\boldsymbol{w}^i)$ strongly in $L^2(\Omega ; \R^N)$. On the other hand,
\begin{align*}
 &\sum_{i=1}^{n} \Big| \int_\Omega \big(\mathcal{R}(\boldsymbol{w}^{1,m}) - \mathcal{R}(\boldsymbol{w}^{2,m}) \big)\cdot \phi_i dx \Big|\\
 & \qquad 
 \leq \frac{1}{m}\|\mathcal{R}(\boldsymbol{w}^{1,m}) - \mathcal{R}(\boldsymbol{w}^{2,m})\|_{L^2(\Omega)}\leq \frac{C}{m}\to 0\quad (m\to\infty),
\end{align*}
and so
\begin{align*}
 \int_\Omega \big(\mathcal{R}(\boldsymbol{w}^{1}) - \mathcal{R}(\boldsymbol{w}^{2}) \big)\cdot \phi_i dx = 0\quad\forall i\in\N .
\end{align*}
Being $(\phi_i)_{i\in\N}$ dense in $L^2(\Omega)$, this implies that $\mathcal{R}(w^1) = \mathcal{R}(w^2)$. But
\begin{align*}
 \|\mathcal{R}(\boldsymbol{w}^1)-\mathcal{R}(\boldsymbol{w}^2)\|_{L^2(\Omega)} = \lim_{m\to\infty}\|\mathcal{R}(\boldsymbol{w}^{1,m}) - \mathcal{R}(\boldsymbol{w}^{2,m}) \|_{L^2(\Omega)}
 \geq\delta_0 > 0,
\end{align*}
which is a contradiction.
 This finishes the proof.
\end{proof}

We recall the following remark, see \cite{Drey17}. For completeness and clarity, we give a full proof.
\begin{lemma}\label{remark.14}
If a subset $\{ u_{\eps} \}_{\eps\in (0,1]}$ of $C([0,T];L^2(\Omega))$ is relatively compact in \\
$C([0,T];L^2(\Omega))$, then the set 
$\mathcal F \equiv \cup_{\eps\in (0,1]} \cup_{t\in[0,T]}\{u_\eps(t)\}$ is relatively compact in $L^2(\Omega)$. In this case, given any 
$f\in C^0(\R)$, the set $\mathcal F_f \equiv \cup_{\eps\in (0,1]} \cup_{t\in[0,T]}\{f(u_\eps(t))\}$ is relatively compact in $L^2(\Omega)$.
\end{lemma}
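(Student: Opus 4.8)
The plan is to split the statement into two independent claims and handle each with a standard compactness-of-continuous-curves argument followed by continuity of the Nemytskii operator $u\mapsto f(u)$. First I would prove that $\mathcal F = \bigcup_{\eps}\bigcup_{t\in[0,T]}\{u_\eps(t)\}$ is relatively compact in $L^2(\Omega)$. The key observation is that the evaluation map $\mathrm{ev}:C([0,T];L^2(\Omega))\times[0,T]\to L^2(\Omega)$, $(u,t)\mapsto u(t)$, is continuous (indeed $\|u(t)-v(s)\|_{L^2}\le \|u-v\|_{C([0,T];L^2)}+\|v(t)-v(s)\|_{L^2}$, and the second term goes to $0$ as $s\to t$ by uniform continuity of $v$ on $[0,T]$). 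Hence $\mathcal F$ is the image under $\mathrm{ev}$ of the set $\overline{\{u_\eps\}_{\eps\in(0,1]}}\times[0,T]$, which is compact in $C([0,T];L^2(\Omega))\times[0,T]$ by hypothesis (a compact set times a compact interval). The continuous image of a compact set is compact, so $\mathcal F$ has compact closure in $L^2(\Omega)$.

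For the second claim I would use that the Nemytskii operator associated with $f\in C^0(\R)$ need not be continuous on all of $L^2(\Omega)$, so instead I would argue directly on the compact set $\overline{\mathcal F}$. Since $\overline{\mathcal F}$ is compact in $L^2(\Omega)$, it is in particular bounded and uniformly integrable; by Vitali's theorem (or by extracting subsequences and using the a.e.-convergent subsequence theorem together with uniform integrability) one shows that every sequence in $\mathcal F$ has a subsequence converging in $L^2(\Omega)$ to some limit in $\overline{\mathcal F}$, along which $f(u_{\eps_k}(t_k))\to f(u)$ a.e.; to upgrade this a.e. convergence to $L^2$ convergence one needs an equi-integrability bound on $|f(u_\eps(t))|^2$, which is where a little care is required. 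The clean way is: given $\eta>0$, use compactness of $\overline{\mathcal F}$ in $L^2$ to cover it by finitely many balls of radius $\eta$ centred at points $v_1,\ldots,v_M\in L^2(\Omega)$; truncate $f$ at a large level to get $f_L$ with $f_L$ bounded and $f_L=f$ on $[-L,L]$; control the contribution of the set $\{|u_\eps(t)|>L\}$ uniformly using that $\overline{\mathcal F}$ is uniformly integrable together with $\sup_{\overline{\mathcal F}}\|\cdot\|_{L^2}<\infty$; and for the bounded part use the uniform continuity of $f_L$ on $[-L,L]$. This produces a finite $\eta$-net for $\mathcal F_f$ in $L^2(\Omega)$, so $\mathcal F_f$ is totally bounded, hence relatively compact.

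The main obstacle is the passage from the a.e./measure-wise convergence of $f(u_\eps(t))$ to genuine $L^2(\Omega)$ convergence, i.e. ruling out concentration of mass of $|f(u_\eps(t))|^2$ on small sets; equivalently, establishing uniform integrability of the family $\{|f(u_\eps(t))|^2 : \eps\in(0,1],\, t\in[0,T]\}$. This follows from the relative compactness of $\mathcal F$ in $L^2(\Omega)$ (hence its uniform integrability) combined with the growth of $f$ being controlled on the relevant sublevel sets, but it must be spelled out rather than asserted. Everything else — the continuity of evaluation, the compactness of a product of compact sets, and the total-boundedness reformulation of relative compactness — is routine.
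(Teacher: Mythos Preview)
Your argument for the first claim is correct and is essentially the paper's proof repackaged: the paper extracts convergent subsequences of $(u_{\eps_n})$ in $C([0,T];L^2)$ and of $(t_n)$ in $[0,T]$ and applies the same triangle inequality you write out, while you phrase this as continuity of the evaluation map on the compact product $\overline{\{u_\eps\}}\times[0,T]$.

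For the second claim there is a genuine gap, which you correctly locate but do not close --- and it cannot be closed at the stated level of generality. You need uniform integrability of $\{|f(u_\eps(t))|^2\}$ and assert that it ``follows from the relative compactness of $\mathcal F$ in $L^2(\Omega)$ (hence its uniform integrability) combined with the growth of $f$ being controlled on the relevant sublevel sets''. But relative compactness of $\mathcal F$ in $L^2$ only makes the \emph{measure} of $\{|u|>L\}$ uniformly small; on that set $|f(u)|^2$ can still be arbitrarily large when $f$ has superlinear growth, so $\int_{\{|u|>L\}}|f(u)|^2\,dx$ is not controlled. A concrete counterexample: $\Omega=(0,1)$, $u_\eps(t)\equiv u$ with $u(x)=x^{-1/4}\in L^2(0,1)$, and $f(s)=s^3$; then $\mathcal F=\{u\}$ is trivially compact in $L^2$, yet $f(u)(x)=x^{-3/4}\notin L^2(0,1)$, so $\mathcal F_f\not\subset L^2(\Omega)$ and the conclusion fails. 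The paper's own proof merely calls this step ``straightforward'', which it is only under an additional hypothesis such as $f$ bounded --- and that is the only case actually invoked afterwards (the map $\mathcal R$ in Lemma~\ref{lemma.15} is assumed bounded). With $f$ bounded and $\Omega$ bounded, the Nemytskii operator $u\mapsto f(u)$ is continuous $L^2(\Omega)\to L^2(\Omega)$ by dominated convergence, and then either your totally-bounded argument or a direct continuous-image-of-compact argument goes through immediately.
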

\begin{proof}
Let $(u_{\eps_n}(t_n))_{n\in\N}$ be an arbitrary sequence of points of $\mathcal F$. The sequence $(u_{\eps_n})_{n\in\N}\subset C([0,T]; L^2(\Omega))$
is relatively compact in $C([0,T]; L^2(\Omega))$, therefore is convergent up to a subsequence. Moreover, the sequence $(t_n)_{n\in\N}\subset [0,T]$
is convergent up to a subsequence, so w.l.o.g. we can write $u_{\eps_n}\to u$ strongly in $C([0,T]; L^2(\Omega))$ and $t_n\to t\in [0,T]$. It follows that
\begin{align*}
\|u_{\eps_n}(t_n) - u(t)\|_{L^2(\Omega)} 
&\leq \|u_{\eps_n}(t_n) - u(t_n)\|_{L^2(\Omega)} + \|u(t_n) - u(t)\|_{L^2(\Omega)} \\
&\leq \|u_{\eps_n} - u\|_{C([0,T]; L^2(\Omega))} + \|u(t_n) - u(t)\|_{L^2(\Omega)} 
\quad\substack{\phantom{aaa} \\ \longrightarrow\\ n\to\infty}\quad 0 .
\end{align*}
Therefore $\mathcal{F}$ is relatively compact in $L^2(\Omega)$. In this case, given any $f\in C^0(\R)$, the relative compactness of $\mathcal{F}_f$
in $L^2(\Omega)$ is straightforward. This finishes the proof of the Lemma.
\end{proof}
Our main compactness tool is given in the following lemma (see Corollary 3.7. in \cite{Drey17}).
\begin{lemma}\label{lemma.15}
 For $n \in \N$, let $w^n:[0,T] \to L^2(\Omega;\R_+\times \R^{N-1})$ be continuous. 
 Assume that $K := \{ w_0^n(\cdot,t)\in L^2(\Omega) : n\in\N, t \in [0,T] \}$ is relatively compact in $L^2(\Omega)$, and that $\overline{w}^n$ is 
 bounded in $L^1((0,T); H^1(\Omega))$. Furthermore, let $\mathcal{R} : \R_+ \times \R^{N-1} \to \R_+^N$ be continuous and bounded. Then,
 $\mathcal{R}(w^n)$ is (up to subsequence) strongly convergent in $L^1(\Omega\times (0,T))$.
\label{Dreyer.2}
\end{lemma}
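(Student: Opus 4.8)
The plan is to show that, along a subsequence, $\{\mathcal{R}(w^n)\}$ is a Cauchy sequence in $L^1(0,T;L^2(\Omega;\R^N))$; since $\Omega$ is bounded one has $\|\cdot\|_{L^1(\Omega\times(0,T))}\le|\Omega|^{1/2}\|\cdot\|_{L^1(0,T;L^2(\Omega))}$, so this already yields strong convergence in $L^1(\Omega\times(0,T))$. The two main ingredients are the boundedness of $\mathcal{R}$, which gives a uniform $L^\infty$ bound (hence weak-$L^2$ precompactness, used only to identify the limit), and the quantitative inequality of Lemma~\ref{lemma.13}, which lets one trade the component $\overline{w}^n$ — bounded merely in $L^1(0,T;H^1(\Omega))$ and carrying no time regularity — against an arbitrarily small error plus finitely many scalar ``moments'', the latter being controlled through the compactness of the component $w_0^n$.

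First I would record the uniform bounds: put $M:=\sup|\mathcal{R}|<\infty$ and $C_1:=\sup_n\|\overline{w}^n\|_{L^1(0,T;H^1(\Omega))}<\infty$. Then $\{\mathcal{R}(w^n)\}$ is bounded in $L^\infty(\Omega\times(0,T);\R^N)\subset L^2(\Omega\times(0,T);\R^N)$, so up to a subsequence $\mathcal{R}(w^n)\rightharpoonup R$ weakly in $L^2(\Omega\times(0,T);\R^N)$. Since $w^n$ is continuous into $L^2(\Omega;\R^N)$ and $\mathcal{R}$ is continuous and bounded, dominated convergence shows that $t\mapsto\mathcal{R}(w^n(\cdot,t))$ is continuous into $L^2(\Omega;\R^N)$, so all time integrals below are legitimate. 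By uniqueness of the weak limit it suffices to prove that $\{\mathcal{R}(w^n)\}$ is Cauchy in $L^1(0,T;L^2(\Omega;\R^N))$.

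Next, fix $\delta>0$ and let $m(\delta),C(\delta)$ and $\{\phi_i\}\subset C_c^\infty(\Omega;\R^N)$ be as in Lemma~\ref{lemma.13}, applied with the relatively compact set there taken to be the present $K$. For a.e.\ $t\in(0,T)$ the slices satisfy $w^n(\cdot,t)=(w_0^n(\cdot,t),\overline{w}^n(\cdot,t))\in K\times H^1(\Omega;\R^{N-1})$, so Lemma~\ref{lemma.13} applies to $(w^n(\cdot,t),w^{n'}(\cdot,t))$; integrating the resulting inequality over $t\in(0,T)$ and using $\|\overline{w}^k\|_{L^1(0,T;H^1(\Omega))}\le C_1$ yields
\begin{align*}
&\|\mathcal{R}(w^n)-\mathcal{R}(w^{n'})\|_{L^1(0,T;L^2(\Omega))}\\
&\qquad\le \delta\,(T+2C_1) + C(\delta)\sum_{i=1}^{m(\delta)}\|u^i_n-u^i_{n'}\|_{L^1(0,T)},
\end{align*}
where $u^i_n(t):=\int_\Omega\mathcal{R}(w^n(\cdot,t))\cdot\phi_i\,dx$. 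For each fixed $i$ the sequence $(u^i_n)_n$ is bounded in $L^\infty(0,T)$, and, using the compactness of $w_0^n$, one extracts (diagonally over $i\in\N$) a further subsequence along which $u^i_n\to u^i$ in $L^1(0,T)$ for every $i$ (necessarily $u^i(t)=\int_\Omega R(\cdot,t)\cdot\phi_i\,dx$). The displayed inequality then gives $\limsup_{n,n'\to\infty}\|\mathcal{R}(w^n)-\mathcal{R}(w^{n'})\|_{L^1(0,T;L^2(\Omega))}\le\delta(T+2C_1)$, and a last diagonal extraction over a sequence $\delta_j\to0$ produces a single subsequence along which $\{\mathcal{R}(w^n)\}$ is Cauchy in $L^1(0,T;L^2(\Omega;\R^N))$, hence convergent in $L^1(\Omega\times(0,T))$ with limit $R$.

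The step I expect to be the main obstacle is the $L^1(0,T)$-convergence of the moments $u^i_n$: $\overline{w}^n$ is not precompact and $u^i_n$ still depends on it, so this convergence cannot come for free. Lemma~\ref{lemma.13} is exactly the device that confines the influence of $\overline{w}^n$ to the small error and the finitely many moments, while the compactness of the ``good'' component $w_0^n$ must furnish the missing compactness in time — in the setting of Theorem~\ref{Thm.Main} the sequence $w_0^n$ is in fact relatively compact in $C([0,T];L^2(\Omega))$ (via an Aubin--Lions argument on the regularized system), which is the form in which that compactness actually enters the argument. This extraction is precisely the content of Corollary~3.7 in \cite{Drey17}, which I would either reproduce in full or simply invoke.
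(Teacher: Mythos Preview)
Your framework is exactly the paper's: apply Lemma~\ref{lemma.13} at each time slice, integrate over $(0,T)$, and reduce the Cauchy property of $\mathcal{R}(w^n)$ in $L^1(0,T;L^2(\Omega))$ to the $L^1(0,T)$-convergence of the moments $u^i_n(t)=\int_\Omega \mathcal{R}(w^n(\cdot,t))\cdot\phi_i\,dx$. Up to this point your argument and the paper's coincide.

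Where you leave a gap is in dispatching the moments. You assert that ``using the compactness of $w_0^n$'' one extracts $u^i_n\to u^i$ in $L^1(0,T)$, but you give no mechanism by which the \emph{spatial} compactness of $K\subset L^2(\Omega)$ supplies temporal compactness for the scalar functions $u^i_n$, which (as you note yourself) still depend on $\overline{w}^n$. Your proposed fallback --- to invoke Corollary~3.7 of \cite{Drey17} --- is circular: Lemma~\ref{lemma.15} \emph{is} that corollary (the paper says so in the sentence preceding the statement). The remark about $C([0,T];L^2(\Omega))$ compactness in the application is correct as context (it is what Lemma~\ref{remark.14} prepares) but is not a hypothesis of the lemma you are proving here.

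The paper's own argument for this step is short and does not touch $K$ again after Lemma~\ref{lemma.13}: from the boundedness of $\mathcal{R}$ it passes to a subsequence along which $\mathcal{R}(w^n(t))$ converges weakly in $L^2(\Omega)$ for a.e.\ $t$, whence $u^i_n(t)-u^i_{n'}(t)\to 0$ a.e.\ in $t$; since $|u^i_n(t)-u^i_{n'}(t)|\le C\|\phi_i\|_{L^2}$ uniformly, dominated convergence yields $\int_0^T|u^i_n-u^i_{n'}|\,dt\to 0$. There is then no need for your second diagonal over $\delta_j\to 0$: a single subsequence is fixed once, and the integrated inequality from Lemma~\ref{lemma.13} is then applied for every $\delta>0$ to conclude the Cauchy property.
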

\begin{proof}
Apply Lemma~\ref{lemma.13}. For every $\delta>0$ there exist $C(\delta)>0$, $m(\delta)\in\N$ such that, for every $n,n'\in\N$ it holds that
\begin{align*}
 &\|\mathcal{R}(w^n(t)) - \mathcal{R}(w^{n'}(t))\|_{L^2(\Omega)}
 \leq \delta(1 + \|\overline{w}^n(t)\|_{H^1} + \|\overline{w}^{n'}(t)\|_{H^1})\\
 &\qquad +C(\delta)\sum_{i=1}^{m(\delta)}\left| \int_{\Omega}\left(\mathcal{R}(w^n(t)) - \mathcal{R}(w^{n'}(t))\right)\cdot\phi_i dx\right| .
\end{align*}
By integrating the above estimate in time and exploiting the boundedness of $\overline{w}^n$ in $L^1((0,T); H^1(\Omega))$, we deduce
\begin{align*}
 &\int_0^T\|\mathcal{R}(w^n(t)) - \mathcal{R}(w^{n'}(t))\|_{L^2(\Omega)}dt\\
 &\leq \delta C
 +C(\delta)\sum_{i=1}^{m(\delta)}\int_0^T\left| \int_{\Omega}\left(\mathcal{R}(w^n(t)) - \mathcal{R}(w^{n'}(t))\right)\cdot\phi_i dx\right|dt .
\end{align*}
The boundedness of the mapping $\mathcal{R}$ implies that, up to subsequences, 
$\mathcal{R}(w^n(t))$ is weakly convergent in $L^2(\Omega)$ for a.e.~$t\in [0,T]$, and so
\begin{align*}
 \int_{\Omega}\left(\mathcal{R}(w^n(t)) - \mathcal{R}(w^{n'}(t))\right)\cdot\phi_i dx\to 0\quad\mbox{as }n, n'\to\infty ,~~\mbox{a.e. }t\in [0,T], 
 \quad i\in\N .
\end{align*}
Moreover, 
\begin{align*}
 \left| \int_{\Omega}\left(\mathcal{R}(w^n(t)) - \mathcal{R}(w^{n'}(t))\right)\cdot\phi_i dx\right|\leq C\|\phi_i\|_{L^2(\Omega)},~~
 \mbox{a.e. }t\in [0,T],~~ i\in\N .
\end{align*}
The dominated convergence theorem yields
\begin{align*}
 \int_0^T\left| \int_{\Omega}\left(\mathcal{R}(w^n(t)) - \mathcal{R}(w^{n'}(t))\right)\cdot\phi_i dx\right|dt\to 0\quad\mbox{as }n, n'\to\infty,
 ~~i\in\N .
\end{align*}
It follows that $\nu\in\N$ exists such that, for $n,n'\geq\nu$,
\begin{align*}
 \int_0^T\left| \int_{\Omega}\left(\mathcal{R}(w^n(t)) - \mathcal{R}(w^{n'}(t))\right)\cdot\phi_i dx\right|dt\leq\frac{\delta}{m(\delta)C(\delta)},
 ~~1\leq i\leq m(\delta) .
\end{align*}
As a consequence, it holds that
\begin{align*}
 \int_0^T\|\mathcal{R}(w^n(t)) - \mathcal{R}(w^{n'}(t))\|_{L^2(\Omega)}dt\leq \delta C, \quad n,n'\geq\nu .
\end{align*}
In particular, $\mathcal{R}(w^n)$ is Cauchy (and therefore convergent) in $L^1(\Omega\times (0,T))$. This finishes the proof.
\end{proof}



\section{Existence proof}\label{Sec.Exis}

The proof is divided into several steps.\medskip

{\bf Step 1: discretization and regularization.}  
Fix $T>0$. For $N\in \N$ we define $\kappa = T/N$, $t_k = \kappa k$ ($k=0,\ldots,N$), $S_{i}^{0} = S_{i,0}$ $(i=1,\ldots,n)$. 

Consider the implicit Euler  discretization:
\begin{align}\nonumber
&\mbox{given ${\wb^{k-1}}\in H^1_0(\Omega ; \R^n)$,
 find ${\wb^k}\in H^1_0(\Omega; \R^n)$ such that:}\\
& \sum_{i=1}^n  \int_\Omega  \frac{S_i^k - S_i^{k-1}}{\kappa}\phi_i dx 
 =  -\sum_{i=1}^n \int_\Omega \frac{S_i^k}{S^k} a(S^k)  p_c'(S^k) \nabla S^k \cdot \nabla \phi_i dx \nonumber\\
 &  \qquad  -  \sum_{i=1}^n \int_\Omega \frac{S_i^k}{S^k} a(S^k)   \nabla \frac{\beta(S^k) - \beta(S^{k-1}) }{\kappa}  \cdot \nabla \phi_i dx \nonumber \\
 & \qquad - \int_\Omega \sum_{i,j=1}^n D_{ij}(S_1^k,\ldots,S_n^k)\nabla w_j^k \cdot \nabla \phi_i dx \nonumber\\
 & \qquad - \eps  \sum_{i=1}^n \int_\Omega  \frac{S_i^k}{S^k} \nabla w_i^k\cdot  \nabla \phi_i dx, 
\label{DisReg.1}
\end{align}
for all $\phi_1,\ldots,\phi_n \in H^1_0(\Omega)$,
where $\Sub^k : \Omega\times (0,T)\to\R^n$ is (implicitly) defined by 
\[ w_i^k = \log \Big(  \frac{S_i^k}{S^k}\Big) 
+ \frac{\beta(S^k) - \beta(S^{k-1})}{\kappa}, \qquad i=1,\ldots,n, \]
and we denoted $S^k = \sum_{i=1}^n S_i^k$. 
Here we assume that $S^{k-1}\in H^{1}(\Omega)$.

\medskip

{\bf Step 2: linearized approximated problem.}  
Using the fact that 
\[ \nabla S = \sum_{\ell=1}^n\frac{\pa S}{\pa w_\ell}{(w)}\na w_\ell, \]
equation  \eqref{DisReg.1} can be simply rewritten as
 \begin{align}\nonumber
&  \sum_{i=1}^n \int_\Omega  \frac{S_i^k - S_i^{k-1}}{\kappa}\phi_i dx 
 =  - \sum_{i=1}^n \int_\Omega \frac{S_i^k}{S^k} a(S^k)  p_c'(S^k) \sum_{\ell=1}^n\frac{\pa S}{\pa w_\ell}{(w^k)}\na w_\ell^k\cdot \nabla \phi_i dx \nonumber\\
 &  \qquad  - \frac{1}{\kappa}\sum_{i=1}^n \int_\Omega \frac{S_i^k}{S^k} a(S^k) 
\tau(S^k)\sum_{\ell=1}^n\frac{\pa S}{\pa w_\ell}\na w_\ell^k\cdot \nabla \phi_i dx \nonumber\\
&  \qquad   + \frac{1}{\kappa}\sum_{i=1}^n\int_\Omega \frac{S_i^k}{S^k} a(S^k) \tau(S^{k-1})\nabla S^{k-1} \cdot \phi_i dx \nonumber \\
 & \qquad - \int_\Omega \sum_{i,j=1}^n D_{ij}(S_1^k,\ldots,S_n^k)\nabla w_j^k \cdot \nabla \phi_i dx \nonumber\\
 &\qquad - \eps  \sum_{i=1}^n \int_\Omega  \frac{S_i^k}{S^k} \nabla w_i^k\cdot  \nabla \phi_i dx. 
\label{DisLin.2}
\end{align}
 Now, the linearized problem has the following form:
\begin{align}\nonumber
&\mbox{let $\wb^*\in L^2(\Omega)$ and  $\sigma\in [0,1]$ be given, 
 find $\wb\in H^1_0(\Omega)$ such that:}\\
&  \sigma\sum_{i=1}^n \int_\Omega  \frac{S_i^* - S_i^{k-1}}{\kappa}\phi_i dx \nonumber\\
&\quad =  - \sum_{i=1}^n \int_\Omega \frac{S_i^*}{S^*} a(S^*)  p_c'(S^*) \sum_{\ell=1}^n\frac{\pa S}{\pa w_\ell}(w^*)\na w_\ell\cdot \nabla \phi_i dx \nonumber\\
 &  \qquad  - \frac{1}{\kappa}\sum_{i=1}^n \int_\Omega \frac{S_i^*}{S^*} a(S^*) 
\tau(S^*)\sum_{\ell=1}^n\frac{\pa S}{\pa w_\ell}(w^*)\na w_\ell\cdot \nabla \phi_i dx \nonumber\\
&  \qquad   + \frac{\sigma}{\kappa}\sum_{i=1}^n\int_\Omega \frac{S_i^*}{S^*} a(S^*) \tau(S^{k-1})\nabla S^{k-1} \cdot {\nabla \phi_i }dx \nonumber \\
 & \qquad - \int_\Omega \sum_{i,j=1}^n D_{ij}(S_1^*,\ldots,S_n^*)\nabla w_j \cdot \nabla \phi_i dx \nonumber\\
 &\qquad - \eps  \sum_{i=1}^n \int_\Omega  \frac{S_i^*}{S^*} \nabla w_i\cdot  \nabla \phi_i dx, 
\label{DisRegLin.1}
\end{align}
for all $\phi_i \in H_0^1(\Omega)$, where $S_i^*$ is defined by
\[ w_i^* = \log \Big(  \frac{S_i^*}{S^*}\Big) 
+ \frac{\beta(S^*) - \beta(S^{k-1})}{\kappa}, \qquad i=1,\ldots,n, \]
and we denoted $S^* = \sum_{i=1}^n S_i^*$. The above problem can be summarized as
\begin{align}
   a(\wb,\phib) = \sigma F(\phib),\quad \forall \phib \in H_0^1(\Omega;\R^n),    
   \label{LaxMil_Eq}
   \end{align}
where
\begin{align}\nonumber
a(\wb,\phib) & = \sum_{i,\ell=1}^n \int_\Omega  \frac{S_i^*}{S^*}\frac{\pa S}{\pa w_\ell}{(S^*)} a(S^*)\Big[  p_c'(S^*) + \frac{\tau(S^*)}{\kappa}  \Big] 
\nabla w_\ell \cdot \nabla \phi_i dx \\
& + \sum_{i,j=1}^n \int_\Omega   D_{ij}(S_1^*,\ldots,S_n^*)  \nabla w_j \cdot \nabla \phi_i dx \nonumber \\
& {+} \eps  \sum_{i=1}^n \int_\Omega  \frac{S_i^*}{S^*} \nabla w_i\cdot  \nabla \phi_i dx
\label{bili_form}
\end{align}
\begin{align}
F(\phib) & = -   \sum_{i=1}^n \int_\Omega  \frac{S_i^* - S_i^{k-1}}{\kappa}\phi_i dx 
 - \int_\Omega \frac{S_i^*}{S^*} a(S^*) \frac{\tau(S^{k-1})\nabla S^{k-1}}{\kappa} \cdot {\nabla \phi_i} dx.
\label{Funct} 
\end{align}
It is easy to see that the functional $F$ is continuous, \textit{i.e.} it holds
\[ |F(\phib)| \leq C \|\phib\|_{H^1(\Omega,\R^n)}. \]
The bilinear form \eqref{bili_form} can be written as:
\begin{align*}
  a(\wb,\phib) & = \sum_{i,j=1}^n \int_\Omega  \alpha_{ij}(S_1^*,\ldots,S_n^*) \nabla {w_j} \cdot \nabla \phi_i dx 
   +  \eps  \sum_{i=1}^n \int_\Omega  \frac{S_i^*}{S^*} \nabla w_i\cdot  \nabla \phi_i dx,
\end{align*}
with
\[  \alpha_{ij}(S_1^*,\ldots,S_n^*) =  D_{ij}(S_1^*,\ldots,S_n^*)  + S_i^* \frac{\pa S}{\pa {w_j}}(S^*) G(S^*), \]
where
\[ G(S^*) = \frac{a(S^*)}{S^*} \Big[  p_c'(S^*) + \frac{\tau(S^*)}{\kappa}  \Big]. \]

Thanks to Lemma \ref{lem.symm} and the nonnegativity of $G(S^*)$:
\begin{align*}
 a(\wb,\wb) &\geq  
 \sum_{i,j=1}^n \int_\Omega  D_{ij}(S_1^*,\ldots,S_n^*) \nabla  w_i \cdot \nabla w_j dx 
 +  \eps\sum_{i=1}^n\int_\Omega\frac{S_i^*}{S^*} |\nabla w_i|^2 dx .
\end{align*}
From Assumption \textbf{(H1)} we obtain
\begin{align*}
 & a(\wb,\wb) \\ 
 &\geq  
 \min\{D_0,\eps\}\left( \sum_{i,j=1}^n \int_\Omega  \left(\delta_{ij}-\frac{1}{n}\right) \nabla  w_i \cdot \nabla w_j dx 
 +  \sum_{i=1}^n \int_\Omega \left|\sqrt{\frac{S_i^*}{S^*}}\nabla w_i\right|^2 dx \right).
\end{align*}

 Now we apply Lemma \ref{lem.JMZ} and deduce
 \begin{align} \nonumber
 a(\wb,\wb) 
 & \geq 
 \frac{\min(D_0,\eps)}{4 n}  \int_\Omega \Big( \sum_{i=1}^n \sqrt{\frac{S_i^*}{S^*}} \Big)^2 |\na \wb|^2 dx .
 \nonumber
 \end{align}
Next, since
$ \Big( \sum_{i=1}^n \sqrt{\frac{S_i^*}{S^*}} \Big)^2 \geq n$,
we conclude that the bilinear form $a(\wb, \wb)$ is coercive
in $H^1_0(\Omega)$, \textit{i.e.}
\begin{align}\nonumber
a(\wb, \wb) &\geq\sum_{i,j=1}^n\int_\Omega D_{ij}(S_1^*,\ldots,S_n^*)\nabla  w_i\cdot \nabla w_j dx + \eps\sum_{i=1}^n\int_\Omega\frac{S_i^*}{S^*}|\nabla w_i|^2 dx\\
\label{lin.lb}
 &\geq C(\eps) \|\nabla \wb\|_{L^2(\Omega,\R^n)}^2 \geq 
 C(\eps)\|\wb\|_{H^1(\Omega,\R^n)}^2 ,
\end{align}
the last inequality being a consequence of Poincar\'e's Lemma.
Therefore we can deduce by Lax-Milgram lemma the existence of a unique solution $\wb \in H^1_0(\Omega;\R^n)$ to \eqref{DisRegLin.1}.
\begin{remark} \label{Bound-nabla-mu}
We note that from the coercivity of the bilinear form $a(\wb,\wb)$ it directly follows that the solution $\wb\in H^1_0(\Omega)$
to the linearized problem satisfies 
$\| \nabla \wb\|_{L^2(\Omega, \R^n)} \leq C(\eps)$.
\end{remark}
{\bf Step 3: solution of the nonlinear approximated problem.} We reformulate \eqref{DisReg.1} as a fixed-point problem for a suitable operator and we solve it via Leray-Schauder fixed point theorem.
The {\bf Step 2} allows us to define an operator $T: L^2(\Omega;\R^n)\times [0,1] \to L^2(\Omega;\R^n)$ in the following way:
for $\wb^{*} \in L^2(\Omega;\R^n)$, $\sigma \in [0,1]$, it holds that $\wb = T(\wb^{*},\sigma) \in H^1_0(\Omega;\R^n)$ is the solution to \eqref{DisRegLin.1}. In a standard way we can show that the mapping $T$ is continuous. Moreover, $ T: L^2(\Omega;\R^n)\times [0,1]  \to L^2(\Omega;\R^n)$ is compact due to the compact Sobolev embedding $H^1(\Omega;\R^n) \hookrightarrow L^2(\Omega;\R^n)$. 
Furthermore, it holds that $T(\cdot,0)\equiv 0$.
It remains to prove a uniform bound (with respect to $\sigma$) for all fixed points of $T(\cdot,\sigma)$ in $L^2(\Omega,\R^n)$. Let $\wb \in L^2(\Omega,\R^n)$ be such a fixed point. Then $\wb$ solves \eqref{LaxMil_Eq} with a test-function $\phi$ replaced by $\wb$.
We have
\begin{align*}
 C(\eps) \|\wb\|_{H^1(\Omega,\R^n)}^2 \leq a(\wb,\wb) = \sigma F(\wb) \leq C \|\wb\|_{L^2(\Omega,\R^n)},
\end{align*} 
yielding an $H^1$ bound for $\wb$, uniform in $\sigma$. 
Thanks to Leray-Schauder's fixed point theorem we get the existence of a solution $\wb \in H^1_0(\Omega;\R^n)$ to
\eqref{LaxMil_Eq} for $\sigma = 1$. In this way we proved the solution to \eqref{DisReg.1}.

{\bf Step 4: uniform in $\kappa$ a-priori estimates.}  
Let us choose
$$\phi  = \wb^k = \mub^k - \mub(\Sub^\Gamma) +\Big(  \frac{\beta(S^k)-\beta(S^{k-1})}{\kappa}  \Big) \Oneb $$ 
in \eqref{DisReg.1}.
Since $\mu_i^k = \partial_i \Fc(\Sub^k)$ and $\Fc(\Sub)$ is convex, it follows that
 \[   \sum_{i=1}^n (S_i^k - S_i^{k-1})(\mu_i^k - \mu_i(\Sub^\Gamma))
 \geq \tilde\Fc(\Sub^k) - \tilde\Fc(\Sub^{k-1}),  \]
where $\tilde\Fc$ is the relative entropy density defined in 
\eqref{rel.entr}.
Moreover, the nonnegativity and boundedness of $\beta'$ allows us to write
$$
(S^k - S^{k-1})(\beta(S^k) - \beta(S^{k-1})) \geq C( \beta(S^k) - \beta(S^{k-1}) )^2,
$$
where $\di C = \frac{1}{\max_{0 \leq S \leq 1} \beta'(S)}$.
In this way we obtain 
\begin{align}\nonumber
&\frac{1}{\kappa}\int_\Omega \tilde\Fc(\Sub^k) dx + C \int_\Omega\left(\frac{\beta(S^k)-\beta(S^{k-1})}{\kappa}\right)^2 dx\\
\nonumber
&\qquad + \sum_{i=1}^n
\int_\Omega \frac{S_i^k}{S^k}a(S^k) p_c'(S^k) \nabla S^k \cdot \nabla w_i^k dx \nonumber\\
& \qquad  + \frac{1}{\kappa} \sum_{i=1}^n \int_\Omega \frac{S_i^k}{S^k} a(S^k) \big( \nabla \beta(S^k) - \nabla \beta(S^{k-1}) \big) \cdot \nabla w_i^k dx\nonumber \\
& \qquad + \sum_{i,j=1}^n \int_\Omega D_{ij}(S_1^k,\ldots,S_n^k)\nabla w_j^k \cdot \nabla w_i^k dx 
+ {\eps \sum_{i=1}^n \int_{\Omega}\frac{S_i^k}{S^k} |\nabla w_i^k|^2 dx} \nonumber \\
& \qquad \qquad \leq \frac{1}{\kappa} \int_{\Omega} \tilde\Fc(\Sub^{k-1})  dx.  
\label{TF1-1}
\end{align}      
Taking into account \eqref{lin.lb} and Assumption \textbf{(H1)}, one gets
\begin{align}\nonumber
&\frac{1}{\kappa}\int_\Omega \tilde\Fc(\Sub^k) dx 
+ C \int_\Omega\left(\frac{\beta(S^k)-\beta(S^{k-1})}{\kappa}\right)^2 dx\\
\nonumber
&\qquad + \sum_{i=1}^n
\int_\Omega \frac{S_i^k}{S^k}a(S^k) p_c'(S^k) \nabla S^k \cdot \nabla w_i^k dx \nonumber\\
& \qquad  + \frac{1}{\kappa} \sum_{i=1}^n \int_\Omega \frac{S_i^k}{S^k} a(S^k) \big( \nabla \beta(S^k) - \nabla \beta(S^{k-1}) \big) \cdot \nabla w_i^k dx\nonumber \\
&\qquad + C\|\Pi\na {w^k}\|_{L^2(\Omega)}^2
\nonumber\\
& \qquad + C\eps\|w\|_{H^1(\Omega)}^2 \leq \frac{1}{\kappa} \int_\Omega \tilde\Fc(\Sub^{k-1}) dx.  
\label{TF1-2}
\end{align}      
Using the relation \eqref{Pom},
 we obtain
\begin{align}
\sum_{i=1}^n
\int_\Omega \frac{S_i^k}{S^k}a(S^k) &  p_c'(S^k) \nabla S^k \cdot \nabla \mu_i^k dx 
     = \int_\Omega  p_c'(S^k) \beta'(S^k) |\nabla S^k|^2 dx.
\label{TF1-3}
\end{align}
In this way we get:
\begin{align}\nonumber
\frac{1}{\kappa} \int_\Omega \tilde\Fc(\Sub^k)  dx &
+ C \int_\Omega\left(\frac{\beta(S^k)-\beta(S^{k-1})}{\kappa}\right)^2 dx + \int_\Omega p_c'(S^k) \beta'(S^k) |\nabla S^k|^2 dx  \nonumber\\
& + \frac{1}{\kappa}\int_\Omega a(S^k) p_c'(S^k) \nabla S^k \cdot \big( \nabla\beta(S^k)- \nabla \beta(S^{k-1}) \big) dx \nonumber\\
& + \frac{1}{\kappa}\int_\Omega \big( \nabla\beta(S^k)- \nabla \beta(S^{k-1}) \big)  \cdot \nabla \beta(S^k)  \nonumber \\
& + \frac{1}{\kappa^2}\int_\Omega a(S^k) | \nabla \beta(S^k)  -  \nabla \beta(S^{k-1})  |^2 dx  \nonumber \\
& 
 + C\eps \sum_{i=1}^n \|w_i^k\|^2_{H^1(\Omega} 
 + C\sum_{i=1}^n\|\Pi\na \mu_i^k\|_{L^2(\Omega)}^2 \leq 
 \frac{1}{\kappa} \int_\Omega \tilde\Fc(\Sub^{k-1})  dx .
\label{TF1-4}
\end{align}          
Next, using the fact that
$ (a-b)a \geq \frac{1}{2} (a^2 - b^2),$
we obtain
\begin{align*}
 \int_\Omega \nabla \Big(   \frac{\beta(S^k) - \beta(S^{k-1}) }{\kappa} \Big) \cdot \nabla \beta(S^k) dx \geq \frac{1}{2\kappa}  \int_\Omega \Big( |\nabla \beta(S^k)|^2 - |\nabla \beta(S^{k-1})|^2 \Big) dx
\end{align*}
We have:
\begin{align}\nonumber
&\frac{1}{\kappa} \int_\Omega \Big(\tilde\Fc(\Sub^k) 
         + \frac{1}{2} |\nabla \beta(S^k)|^2 \Big) dx 
+ C \int_\Omega\left(\frac{\beta(S^k)-\beta(S^{k-1})}{\kappa}\right)^2 dx\\
& +  \int_\Omega p_c'(S^k)
  \beta'(S^k) |\nabla S^k|^2 dx  \nonumber\\
& + \frac{1}{\kappa} \int_\Omega a(S^k) p_c'(S^k) \nabla S^k \cdot  \Big( \nabla\beta(S^k)-\nabla\beta(S^{k-1}) \Big)  dx  \nonumber \\
& + \frac{1}{\kappa^2} \int_\Omega a(S^k)
 \Big|\nabla\beta(S^k) - \nabla\beta(S^{k-1})  \Big|^2 dx
\nonumber \\
&
+ C\eps \sum_{i=1}^n \|w_i^k\|^2_{H^1(\Omega} 
+ C \sum_{i=1}^n\|\Pi\na \mu_i^k\|_{L^2(\Omega)}^2\nonumber \\
&\qquad \leq \frac{1}{\kappa} \int_\Omega \Big( 
\tilde\Fc(\Sub^{k-1}) +  \frac{1}{2}|\nabla \beta(S^{k-1})|^2  \Big) dx.
\label{TF1-5}
\end{align}     
Next, Young inequality gives:
\begin{align*}
 \int_\Omega a(S^k) &  p_c'(S^k)  
  \nabla S^k \cdot \nabla\frac{\beta(S^k)-\beta(S^{k-1})}{\kappa} dx
\nonumber\\
& \leq \frac{3}{4} \int_\Omega a(S^k) \Big| \nabla  \frac{\beta(S^k) - \beta(S^{k-1})}{\kappa}   \Big|^2 dx \\
&\qquad  + \frac{1}{3} \int_\Omega a(S^k)  (p_c'(S^k))^2 |\nabla S^k|^2 dx
\end{align*}
In this way we get:
\begin{align}\nonumber
\frac{1}{\kappa} \int_\Omega \Big( & \tilde\Fc(\Sub^k) 
        + \frac{1}{2} |\nabla \beta(S^k)|^2 \Big) dx + C \int_\Omega\left(\frac{\beta(S^k)-\beta(S^{k-1})}{\kappa}\right)^2 dx  \\
       & +  \int_\Omega p_c'(S^k) \beta'(S^k) |\nabla S^k|^2 dx  
        + \frac{1}{4\kappa^2} \int_\Omega a(S^k) \Big|\nabla\beta(S^k) - \nabla\beta(S^{k-1})  \Big|^2 dx
\nonumber \\
& + C\eps \sum_{i=1}^n \|w_i^k\|^2_{H^1(\Omega} 
+ C\sum_{i=1}^n\|\Pi\na \mu_i^k\|_{L^2(\Omega)}^2\nonumber \\
& \leq \frac{1}{\kappa} \int_\Omega \Big( \tilde\Fc(\Sub^{k-1}) +  \frac{1}{2}|\nabla \beta(S^{k-1})|^2  \Big) dx
 + \frac{1}{3}  \int_\Omega a(S^k) (p_c'(S^k))^2 |\nabla S^k|^2 dx.
\label{TF1-6}
\end{align}     
Thanks to Lemma~\ref{lemma:coeff:1}, we can estimate the second integral on the right-hand side of \eqref{TF1-6} by means of the
third integral of the left-hand side of \eqref{TF1-6}.
 In this way we get:
\begin{align}\nonumber
\frac{1}{\kappa} \int_\Omega & \Big(\tilde\Fc(\Sub^k) 
         + \frac{1}{2} |\nabla \beta(S^k)|^2 \Big) dx + C \int_\Omega\left(\frac{\beta(S^k)-\beta(S^{k-1})}{\kappa}\right)^2 dx  \\
       & +  \frac{2}{3}\int_\Omega p_c'(S^k) \beta'(S^k) |\nabla S^k|^2 dx  
        + \frac{1}{4\kappa^2} \int_\Omega a(S^k) \Big|\nabla\beta(S^k) - \nabla\beta(S^{k-1})  \Big|^2 dx
\nonumber \\
& + C\eps \sum_{i=1}^n \|w_i^k\|^2_{H^1(\Omega} + C\|\Pi\na \mu_i^k\|_{L^2(\Omega)}^2\nonumber \\
& \leq \frac{1}{\kappa} \int_\Omega \Big(\tilde\Fc(\Sub^{k-1}) +  \frac{1}{2}|\nabla \beta(S^{k-1})|^2  \Big) dx.
\label{TF1-8}
\end{align}     
%
Let us now introduce a new notation. Let us define the piecewise constant-in-time functions:
\begin{align*} 
\Skap_i(t) & = S^0_i\charf_{\{0\}}(t) + \sum_{j=1}^N S^j\charf_{(t_{j-1},t_j]}(t), \\
\mukap_i(t) &= \mu^0_i\charf_{\{0\}}(t) + \sum_{j=1}^N \mu^j\charf_{(t_{j-1},t_j]}(t), 
\end{align*}
and let $\Skap = \sum_{i=1}^n \Skap_i$. We also define the discrete backward time derivative operator $D_\kappa$ as follows:
for every function $f : Q_T\to \R$,
$$ D_\kappa f(x,t) = \frac{f(x,t)-f(x,t-\kappa)}{\kappa}\qquad x\in\Omega,\quad t\in [\kappa, T] . $$
 The discretized-regularized system \eqref{DisReg.1} can be rewritten, in the new notation, as
 \begin{align}\nonumber
  \sum_{i=1}^n \int_0^T\int_\Omega \Big[ \big( D_\kappa \Skap_i  \big)\phi_i &+  \frac{\Skap_i}{\Skap} \frac{a(\Skap)}{\tau(\Skap)} p_c'(\Skap)\nabla \beta(\Skap) \cdot \nabla \phi_i  \\
 & +
 \frac{\Skap_i}{\Skap} a(\Skap) \nabla D_\kappa \beta(\Skap) \cdot \nabla \phi_i   \Big] dx dt \nonumber\\
 & + \int_0^T \int_\Omega \sum_{i,j=1}^n D_{ij}(\Skap_1,\ldots,\Skap_n)\nabla \mu_j^{(\kappa)} \cdot \nabla \phi_i dx dt \nonumber\\
 & + \eps \int_0^T {\int_\Omega}\sum_{i=1}^n \frac{\Skap_i}{\Skap} \nabla \wkap_i \cdot \nabla \phi_i dx dt = 0,\nonumber\\
 &\forall \phi_1,\ldots,\phi_n\in L^2(0,T; H^1_0(\Omega)).
 \label{DisReg.tau.2.1}
 \end{align}
 In the new notation, the entropy inequality \eqref{TF1-8} reads as
\begin{align}\nonumber
&\sup_{t\in [0,T]}\int_\Omega (\tilde\Fc(\Skap)+ \frac{1}{2}|\na\beta(\Skap)|^2) dx \\
& + C \int_0^T\int_\Omega\left(D_\kappa\beta(\Skap)\right)^2 dx dt
+ \frac{2}{3}\int_0^T\int_\Omega \tau(\Skap) p_c'(\Skap)|\nabla \Skap|^2 dx dt
\nonumber\\
& + \frac{1}{4}\int_0^T\int_\Omega a(\Skap)\left|\nabla D_\kappa\beta(\Skap)\right|^2 dx dt
+ C\eps \sum_{i=1}^n \int_0^T\|\wkap_i\|^2_{H^1(\Omega)} dt\nonumber \\
& + C\int_0^T\|\Pi\na\mukap\|_{L^2(\Omega)}^2 dt 
\leq \int_\Omega (\tilde\Fc(\Sub^{0}) 
+ \frac{1}{2}|\na\beta(S^{0})|^2) dx .
\label{dei}
\end{align}            
By using the lower bounded $\di  \sum_{i=1}^n S_i \log \frac{S_i}{S} \geq -m n S \geq -mn$, we obtain the following apriori estimates:

\begin{proposition}\label{prop.7} 
There is a constant $C$, independent of $\kappa$ and $\varepsilon$, 
such that
\begin{align}
	{ \| \mathcal{E}(\Skap) \|_{L^{\infty}(0,T;L^1(\Omega))} } & \leq C, \label{Apr.0}\\
 \| \nabla \beta(\Skap) \|_{L^{\infty}(0,T;L^2(\Omega))} & \leq C, \label{Apr.1}\\
  \| D_\kappa \beta(\Skap) \|_{L^{2}(0,T;L^2(\Omega))} & \leq C, \label{Apr.2}\\
   \| \sqrt{\tau(\Skap) p_c'(\Skap)} \nabla \Skap \|_{L^{2}(0,T;L^2(\Omega))} & \leq C, \label{Apr.3}\\
   \| \sqrt{a(\Skap)} \nabla  D_\kappa \beta(\Skap) \|_{L^{2}(0,T;L^2(\Omega))} & \leq C, \label{Apr.4}\\
  \| \sqrt{\varepsilon}  w_i^{\kappa} \|_{L^{2}(0,T;H^1(\Omega))} & \leq C, \label{Apr.5}\\
   \| \nabla(\Pi \mub^{\kappa})_i \|_{L^2(0,T;L^2(\Omega))} &\leq C,
    \label{Apr.6}
\end{align} 
for $i=1,\ldots,n$.
\end{proposition}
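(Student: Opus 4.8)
The plan is to obtain all seven bounds by reading them off directly from the discrete entropy inequality \eqref{dei}; the only real content is that the right-hand side of \eqref{dei} is a finite constant independent of $\kappa$ and $\eps$, and that the relative entropy density dominates $\cE$ from below up to an additive constant.

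First I would record that the right-hand side of \eqref{dei} equals $C_0:=\int_\Omega\big(\tilde\Fc(\Sub^0)+\frac12|\nabla\beta(S^0)|^2\big)\,dx$, which is finite and independent of $\kappa$ and $\eps$: the second term lies in $L^1(\Omega)$ because $\beta(S^0)\in H^1(\Omega)$ by assumption, and since $\min_i\inf_\Omega S_i^0>0$ and $\max_\Omega S^0<1$, the datum $\Sub^0$ takes values in a compact subset of $\Dc$, so $\tilde\Fc(\Sub^0)$ is bounded on $\Omega$.

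Next I would prove the pointwise lower bound $\tilde\Fc(\Skap)\ge\cE(\Skap)-C_1$ with $C_1$ independent of $\kappa,\eps$. Writing $\tilde\Fc(\Sub)=\sum_{i=1}^nS_i\log(S_i/S)+\cE(S)-\Fc(\Sub^\Gamma)-\mub(\Sub^\Gamma)\cdot(\Sub-\Sub^\Gamma)$, the mixing term is $\ge-\log n$ by the Gibbs/Jensen inequality applied to the probability vector $(S_i/S)_{i}$ together with $S<1$, the affine part is bounded because $\Sub$ ranges over the bounded set $\Dc$ while $\Sub^\Gamma$ is fixed, and $\cE\ge0$ because $\sigma\mapsto\cE(\sigma)$ is convex with $\cE(1/2)=\cE'(1/2)=0$ (the inner integral $\int_{1/2}^{S'}\tau/a\,d\sigma$ has the sign of $S'-1/2$, so $\cE$ attains its minimum $0$ at $1/2$). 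In particular $\tilde\Fc(\Skap)\ge-C_1$ pointwise, hence the supremum term in \eqref{dei} is bounded below by $-C_1|\Omega|$.

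Finally I would extract the estimates. All the time-integrated terms in \eqref{dei} — namely $\int_0^T\!\!\int_\Omega(D_\kappa\beta(\Skap))^2$, $\int_0^T\!\!\int_\Omega\tau(\Skap)p_c'(\Skap)|\nabla\Skap|^2$, $\int_0^T\!\!\int_\Omega a(\Skap)|\nabla D_\kappa\beta(\Skap)|^2$, $\eps\sum_i\int_0^T\|\wkap_i\|_{H^1}^2$ and $\int_0^T\|\Pi\nabla\mukap\|_{L^2}^2$ — are nonnegative, so moving the supremum term (which is $\ge-C_1|\Omega|$) to the right shows each of them is $\le C_0+C_1|\Omega|=:C$; this gives \eqref{Apr.2}, \eqref{Apr.3}, \eqref{Apr.4}, \eqref{Apr.5} and \eqref{Apr.6}, using $|\Pi\nabla\mukap|^2=\sum_{i=1}^n|\nabla(\Pi\mub^{\kappa})_i|^2$ for the last one and isolating a single summand in the $\eps$-term. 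For \eqref{Apr.0} and \eqref{Apr.1}, discarding the nonnegative time integrals in \eqref{dei} leaves $\sup_{t\in[0,T]}\int_\Omega\big(\tilde\Fc(\Skap)+\frac12|\nabla\beta(\Skap)|^2\big)\,dx\le C_0$, and combining with $\tilde\Fc(\Skap)\ge\cE(\Skap)-C_1\ge-C_1$ yields $\sup_t\int_\Omega\big(\cE(\Skap)+\frac12|\nabla\beta(\Skap)|^2\big)\,dx\le C_0+C_1|\Omega|$; since $\cE\ge0$ this is exactly \eqref{Apr.0} and \eqref{Apr.1}. I do not expect a genuine obstacle here: the only points needing care are the nonnegativity of $\cE$ and the $(\kappa,\eps)$-uniform lower bound on $\tilde\Fc$, everything else being the bookkeeping above — the hard work was the derivation of \eqref{dei} that precedes it.
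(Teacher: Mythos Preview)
Your proposal is correct and follows exactly the paper's approach: the paper derives Proposition~\ref{prop.7} directly from the discrete entropy inequality \eqref{dei} together with the lower bound $\sum_{i=1}^n S_i\log(S_i/S)\geq -mnS\geq -mn$ on the mixing part of the free energy, and your write-up simply spells out this one-line argument in full (including the nonnegativity of $\cE$ and the $\kappa,\eps$-independence of the data term $C_0$). Your Jensen/Gibbs bound $\sum_i S_i\log(S_i/S)\geq -\log n$ is a cleaner variant of the paper's pointwise bound, but the role is identical.
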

By using the bound \eqref{Apr.0} on the entropy function we obtain the following bounds:
\begin{lemma}
\label{lemma:first-bound}
There is a constant $C$ 
independent of $\kappa$ and $\varepsilon$, such that 
	\begin{align}
		\| (\Skap)^{2-\gamma_1} \|_{L^{\infty}(0,T;L^1(\Omega))}+
		\| (1-\Skap)^{2-\lambda} \|_{L^{\infty}(0,T;L^1(\Omega))}    & \leq C.
		\label{Apr.7}
	\end{align}
\end{lemma}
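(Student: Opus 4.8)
The plan is to deduce \eqref{Apr.7} from a pointwise lower bound on the entropy density $\mathcal{E}$ combined with the uniform estimate \eqref{Apr.0}. The first step is to make $\mathcal{E}$ explicit. Inserting the forms of $a$ and $\tau$ from \textbf{(H2)} and \textbf{(H4)} into $\tau/a$ — the same algebra underlying Lemma~\ref{lemma:coeff:1} — the powers $S^\gamma$ cancel and one is left with $\tau(\sigma)/a(\sigma)=\sigma^{-\gamma_1}+(1-\sigma)^{-\lambda}$ for $\sigma\in(0,1)$. Plugging this into the definition \eqref{Entropy} of $\mathcal{E}$ and carrying out the inner integration by Fubini's theorem yields, for $0<S\le 1/2$,
\[ \mathcal{E}(S)=\int_S^{1/2}(\sigma-S)\big(\sigma^{-\gamma_1}+(1-\sigma)^{-\lambda}\big)\,d\sigma\ \ge\ \int_S^{1/2}(\sigma-S)\,\sigma^{-\gamma_1}\,d\sigma , \]
and symmetrically, for $1/2\le S<1$,
\[ \mathcal{E}(S)=\int_{1/2}^S(S-\sigma)\big(\sigma^{-\gamma_1}+(1-\sigma)^{-\lambda}\big)\,d\sigma\ \ge\ \int_{1/2}^S(S-\sigma)\,(1-\sigma)^{-\lambda}\,d\sigma , \]
both integrands being nonnegative, so $\mathcal{E}\ge 0$.

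Next I would evaluate these elementary one-dimensional integrals via antiderivatives; this is the only place where the algebraic relations enter. Because $\gamma_1>2$ and $\lambda>2$ (both contained in \textbf{(H5)}), the singular terms survive with strictly positive coefficients, and a short computation produces constants $C_1=C_1(\gamma_1)>0$, $C_2=C_2(\lambda)>0$ with
\[ \mathcal{E}(S)\ \ge\ \frac{S^{2-\gamma_1}}{(\gamma_1-1)(\gamma_1-2)}-C_1\quad(0<S\le 1/2),\qquad \mathcal{E}(S)\ \ge\ \frac{(1-S)^{2-\lambda}}{(\lambda-1)(\lambda-2)}-C_2\quad(1/2\le S<1). \]

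Finally I would conclude by a splitting argument. Fix $t\in[0,T]$; since $\Skap(\cdot,t)$ takes values in $(0,1)$, write $\Omega=A_t\cup(\Omega\setminus A_t)$ with $A_t=\{\Skap(\cdot,t)<1/2\}$. On $A_t$ the first pointwise bound gives $(\Skap)^{2-\gamma_1}\le(\gamma_1-1)(\gamma_1-2)\big(\mathcal{E}(\Skap)+C_1\big)$, while on $\Omega\setminus A_t$ one has trivially $(\Skap)^{2-\gamma_1}=(\Skap)^{-(\gamma_1-2)}\le 2^{\gamma_1-2}$; integrating over $\Omega$ and using $\mathcal{E}\ge0$ together with \eqref{Apr.0} bounds $\|(\Skap(\cdot,t))^{2-\gamma_1}\|_{L^1(\Omega)}$ by a constant independent of $\kappa$, $\varepsilon$ and $t$. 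The symmetric splitting — the second pointwise bound on $\Omega\setminus A_t$ and the trivial estimate $(1-\Skap)^{2-\lambda}\le 2^{\lambda-2}$ on $A_t$ — does the same for $\|(1-\Skap(\cdot,t))^{2-\lambda}\|_{L^1(\Omega)}$. Taking the supremum over $t\in[0,T]$ and adding the two estimates proves \eqref{Apr.7}. I expect no genuine obstacle here: the only computational point is the explicit integration in the second step, where one must verify that the coefficient of $S^{2-\gamma_1}$ (resp. $(1-S)^{2-\lambda}$) is positive, which is exactly what $\gamma_1>2$ and $\lambda>2$ guarantee; everything else is routine bookkeeping.
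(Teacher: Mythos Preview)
Your proposal is correct and follows essentially the same route as the paper: compute $\tau/a=\sigma^{-\gamma_1}+(1-\sigma)^{-\lambda}$, extract from $\mathcal{E}(S)$ the singular contributions $S^{2-\gamma_1}/((\gamma_1-1)(\gamma_1-2))$ and $(1-S)^{2-\lambda}/((\lambda-1)(\lambda-2))$ (positive since $\gamma_1,\lambda>2$), and invoke \eqref{Apr.0}. The only cosmetic difference is that the paper writes down one explicit formula for $\mathcal{E}(S)$ and reads off the lower bound $\mathcal{E}(S)\ge C\big(S^{2-\gamma_1}+(1-S)^{2-\lambda}\big)$ for all $S\in(0,1)$ at once, whereas you split $\Omega$ according to $\{\Skap<1/2\}$ and treat each singular term on its natural half with a trivial bound on the other half; both lead to the same conclusion.
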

\begin{proof}
	By using simple calculations, we get
	\begin{align*}
		\mathcal{E}(S) &= \frac{1}{(\gamma_1-1)(\gamma_1-2)} \frac{1}{S^{\gamma_1-2}}
		+  \frac{1}{(\lambda-1)(\lambda-2)} \frac{1}{(1-S)^{\lambda-2}} + \frac{1-S}{\lambda -1} + \frac{1}{\lambda-2}\\
		&\geq C\left( \frac{1}{S^{\gamma_1-2}} + \frac{1}{(1-S)^{\lambda-2}}\right).
	\end{align*}
	The bound (\ref{Apr.7}) now follows from (\ref{Apr.0}).
\end{proof}

By using (\ref{Apr.3}) we get the following bounds.
\begin{lemma} \label{lemma:second-bound}
Define the exponents $\alpha_1$ and $\alpha_2$ as follows:
  \begin{equation}
   \alpha_1 = 1+(\gamma -\gamma_1 -\beta_1)/2 < 0,\quad \alpha_2 = 1-\beta_2/2 <0.
    \label{coeff:assum:4}
  \end{equation}
  Then, there is a constant $C$, independent of $\kappa$ and $\varepsilon$, such that:
  \begin{align}
   	\| (\Skap)^{\alpha_1} \|_{L^{2}(0,T;L^6(\Omega))}+
	\| (1-\Skap)^{\alpha_2} \|_{L^{2}(0,T;L^6(\Omega))} & \leq C.
	\label{Apr.10}
  \end{align}
\end{lemma}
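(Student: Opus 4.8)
The plan is to bound $(\Skap)^{\alpha_1}$ and $(1-\Skap)^{\alpha_2}$ in $L^2(0,T;H^1(\Omega))$ uniformly in $\kappa$ and $\varepsilon$, and then to invoke the Sobolev embedding $H^1(\Omega)\hookrightarrow L^6(\Omega)$, available since $\Omega\subset\R^3$. This reduces the proof to two ingredients: a zeroth-order bound in $L^\infty(0,T;L^2(\Omega))$, extracted from the entropy estimate \eqref{Apr.7}, and a bound for the gradients in $L^2(0,T;L^2(\Omega))$, extracted from the dissipation estimate \eqref{Apr.3}.

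For the gradient I would use the chain rule $\nabla(\Skap)^{\alpha_1}=\alpha_1(\Skap)^{\alpha_1-1}\nabla\Skap=\alpha_1\,g_1(\Skap)\sqrt{\tau(\Skap)p_c'(\Skap)}\,\nabla\Skap$ and, likewise, $\nabla(1-\Skap)^{\alpha_2}=-\alpha_2\,g_2(\Skap)\sqrt{\tau(\Skap)p_c'(\Skap)}\,\nabla\Skap$, where $g_1(S)^2=S^{2\alpha_1-2}/(\tau(S)p_c'(S))=S^{\gamma-\gamma_1-\beta_1}/(\tau(S)p_c'(S))$ and $g_2(S)^2=(1-S)^{2\alpha_2-2}/(\tau(S)p_c'(S))=(1-S)^{-\beta_2}/(\tau(S)p_c'(S))$. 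The definitions of $\alpha_1,\alpha_2$ in \eqref{coeff:assum:4} are made precisely so that $g_1$ and $g_2$ are bounded on $(0,1)$: using the explicit expressions in \textbf{(H2)}--\textbf{(H4)}, namely $\tau(S)=S^{\gamma-\gamma_1}(S^{\gamma_1}+(1-S)^\lambda)/(S^\gamma+(1-S)^\lambda)$ and $p_c'(S)=S^{-\beta_1}+(1-S)^{-\beta_2}$, a direct computation of the one-sided limits shows $g_1(S)^2\to1$ as $S\to0^+$ and $g_1(S)^2\to0$ as $S\to1^-$, and symmetrically $g_2(S)^2\to1$ as $S\to1^-$ and $g_2(S)^2\to0$ as $S\to0^+$; since $g_1^2,g_2^2$ are continuous and positive on $(0,1)$ they extend continuously to $[0,1]$ and are thus bounded. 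Consequently $|\nabla(\Skap)^{\alpha_1}|+|\nabla(1-\Skap)^{\alpha_2}|\le C\sqrt{\tau(\Skap)p_c'(\Skap)}\,|\nabla\Skap|$, and \eqref{Apr.3} gives
\[
 \|\nabla(\Skap)^{\alpha_1}\|_{L^2(0,T;L^2(\Omega))}+\|\nabla(1-\Skap)^{\alpha_2}\|_{L^2(0,T;L^2(\Omega))}\le C .
\]

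For the zeroth-order term I would compare powers, using $0<\Skap<1$. Since $2\alpha_1=2+(\gamma-\gamma_1-\beta_1)\ge2-\gamma_1$ --- equivalently $\gamma\ge\beta_1$, which holds by \textbf{(H5)} because $\beta_1\le\gamma_1<\gamma$ --- we get $(\Skap)^{2\alpha_1}\le(\Skap)^{2-\gamma_1}$ pointwise, whence $\|(\Skap)^{\alpha_1}\|_{L^\infty(0,T;L^2(\Omega))}^2=\|(\Skap)^{2\alpha_1}\|_{L^\infty(0,T;L^1(\Omega))}\le\|(\Skap)^{2-\gamma_1}\|_{L^\infty(0,T;L^1(\Omega))}\le C$ by \eqref{Apr.7}. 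Similarly $2\alpha_2=2-\beta_2\ge2-\lambda$, i.e.\ $\beta_2\le\lambda$, again by \textbf{(H5)}, so $(1-\Skap)^{2\alpha_2}\le(1-\Skap)^{2-\lambda}$ and $\|(1-\Skap)^{\alpha_2}\|_{L^\infty(0,T;L^2(\Omega))}\le C$ by \eqref{Apr.7}.

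Putting the two pieces together, $(\Skap)^{\alpha_1}$ and $(1-\Skap)^{\alpha_2}$ are bounded in $L^2(0,T;H^1(\Omega))$ by a constant depending only on $\Omega$, $T$ and the fixed exponents, and $H^1(\Omega)\hookrightarrow L^6(\Omega)$ yields \eqref{Apr.10}. The main (though elementary) obstacle is precisely the boundedness of $g_1$ and $g_2$ up to the endpoints $S=0,1$, where the singularity of $p_c'$ and the degeneracy of $\tau$ have to balance; this balance forces $2\alpha_1-2=\gamma-\gamma_1-\beta_1$ and $2\alpha_2-2=-\beta_2$ and is the reason for \eqref{coeff:assum:4}. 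A minor technical point, to be dealt with by truncating $S\mapsto S^{\alpha_1}$ near $0$ and $S\mapsto(1-S)^{\alpha_2}$ near $1$ and passing to the limit, is the justification of the chain rule above, which is legitimate precisely because $g_i(\Skap)\sqrt{\tau(\Skap)p_c'(\Skap)}\,\nabla\Skap$ is square-integrable by \eqref{Apr.3}.
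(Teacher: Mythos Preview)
Your proof is correct and follows essentially the same strategy as the paper: bound the gradients of $(\Skap)^{\alpha_1}$ and $(1-\Skap)^{\alpha_2}$ in $L^2(Q_T)$ via \eqref{Apr.3}, bound the functions themselves in $L^\infty(0,T;L^2(\Omega))$ via \eqref{Apr.7} using the exponent comparisons $2\alpha_1\geq 2-\gamma_1$ and $2\alpha_2\geq 2-\lambda$, and conclude by the Sobolev embedding $H^1(\Omega)\hookrightarrow L^6(\Omega)$. The only cosmetic difference is in the gradient step: the paper extracts from $\tau(S)p_c'(S)$ the two specific cross-terms $S^{\gamma-\gamma_1-\beta_1}(1-S)^\lambda$ and $S^\gamma(1-S)^{-\beta_2}$ and then truncates via $Z=\min(S,1/2)$, $W=\max(S,1/2)$ to kill the extra factors $(1-S)^\lambda$ and $S^\gamma$, whereas you verify directly that the ratios $g_1^2,g_2^2$ are continuous up to both endpoints of $[0,1]$; the two arguments are equivalent.
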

\begin{proof}
    Let us denote,  for notational simplicity,  $S=S^{(\kappa)}$. Then, from (\ref{Apr.3}) and Assumptions {\bf (H3)}, {\bf (H4)} we get
  \begin{align*}
    \int_0^T\int_{\Omega} (S^\gamma + S^{\gamma -\gamma_1}(1-S)^\lambda )( S^{-\beta_1} + (1-S)^{-\beta_2}) |\nabla S|^2 dx dt \leq C,
  \end{align*}
 with the constant $C$ independent of $\kappa$ and $\varepsilon$. 
 As a consequence
  \begin{align*}
    \int_0^T\int_{\Omega}  S^{\gamma -\gamma_1-\beta_1}(1-S)^\lambda |\nabla S|^2 dx dt +
    \int_0^T\int_{\Omega} S^\gamma  (1-S)^{-\beta_2} |\nabla S|^2 dx dt \leq C.
  \end{align*}
  The inequality stated above implies the following bound for the 
  functions $Z\equiv\min(S,1/2)$, $W\equiv\max(S,1/2)$:
  \begin{align*}
    \int_0^T\int_{\Omega}  Z^{\gamma -\gamma_1-\beta_1} |\nabla Z|^2 dx dt +
    \int_0^T\int_{\Omega}  (1-W)^{-\beta_2} |\nabla W|^2 dx dt \leq C,
  \end{align*}
  which can be written as 
  \begin{align*}
    \int_0^T\int_{\Omega}  |\nabla Z^{\alpha_1}|^2 dx dt +
    \int_0^T\int_{\Omega}   |\nabla (1- W)^{\alpha_2}|^2 dx dt \leq C.
  \end{align*}
  The function $Z^{\alpha_1}$ and $ (1-W)^{\alpha_2}$ are in $L^2(0,T;L^2(\Omega))$ due to Assumption {\bf (H5)}
  and Lemma~\ref{lemma:first-bound}. Indeed, Assumption {\bf (H5)} implies that 
  $2\alpha_1 \geq 2- \gamma_1$ and $2\alpha_2 \geq 2-\lambda$. 
  We can then use the Sobolev embedding theorem to get the bound:
 \begin{align*}
   	\| Z^{\alpha_1} \|_{L^{2}(0,T;L^6(\Omega))}+ \| (1-W)^{\alpha_2} \|_{L^{2}(0,T;L^6(\Omega))}    & \leq C.
  \end{align*}
  Due to (\ref{coeff:assum:4}) these bounds hold also for the function $S$ instead of $Z$ and $W$.
\end{proof}

\begin{lemma}
There exists $p>1$ such that 
  \begin{equation}
  \int_0^T \int_\Omega a(S^{(\kappa)})^{-p} dx dt \leq C, \label{Apr.11}
  \end{equation}
 where the constant $C>0$ is   independent of $\kappa$ and $\varepsilon$.
 \label{lemma:third-bound}
\end{lemma}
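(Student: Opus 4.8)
The plan is to reduce the claim to integrability of negative powers of $\Skap$ and of $1-\Skap$, which we then obtain by a space--time interpolation between the $L^\infty_t L^1_x$-type bound of Lemma~\ref{lemma:first-bound} and the $L^2_t L^6_x$-type bound of Lemma~\ref{lemma:second-bound}. The starting point is the algebraic identity following from Assumption~\textbf{(H2)}, namely $a(S)^{-1}=S^{-\gamma}+(1-S)^{-\lambda}$ for $S\in(0,1)$; by convexity this gives, for every $p\ge 1$,
\[
 a(\Skap)^{-p}\le 2^{p-1}\big((\Skap)^{-\gamma p}+(1-\Skap)^{-\lambda p}\big).
\]
Hence it suffices to prove that, for some $p>1$, both $(\Skap)^{-1}\in L^{\gamma p}(\Omega\times(0,T))$ and $(1-\Skap)^{-1}\in L^{\lambda p}(\Omega\times(0,T))$, with norms independent of $\kappa$ and $\varepsilon$.

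First I would rephrase the two available estimates in terms of $(\Skap)^{-1}$. Since $\gamma_1>2$, Lemma~\ref{lemma:first-bound} reads $(\Skap)^{-1}\in L^\infty(0,T;L^{q_1}(\Omega))$ with $q_1:=\gamma_1-2>0$; and since $\alpha_1<0$, Lemma~\ref{lemma:second-bound} reads $(\Skap)^{-1}\in L^{2q_2}(0,T;L^{6q_2}(\Omega))$ with $q_2:=-\alpha_1=(\gamma_1+\beta_1-\gamma-2)/2>0$, both with constants independent of $\kappa,\varepsilon$. The elementary interpolation (H\"older in space followed by H\"older in time)
\[
 \|f\|_{L^{a_\theta}(0,T;L^{b_\theta}(\Omega))}\le \|f\|_{L^\infty(0,T;L^{q_1})}^{1-\theta}\,\|f\|_{L^{2q_2}(0,T;L^{6q_2})}^{\theta},\quad \tfrac1{a_\theta}=\tfrac{\theta}{2q_2},\ \tfrac1{b_\theta}=\tfrac{1-\theta}{q_1}+\tfrac{\theta}{6q_2},
\]
satisfies $a_\theta=b_\theta$ exactly when $\theta=3q_2/(q_1+3q_2)\in(0,1)$, and the common value is $s:=\tfrac23(q_1+3q_2)=\tfrac13(5\gamma_1+3\beta_1-3\gamma-10)$. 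Thus $(\Skap)^{-1}\in L^s(\Omega\times(0,T))$ uniformly in $\kappa,\varepsilon$, and $s>\gamma$ is equivalent to $\gamma<\tfrac12\beta_1+\tfrac56(\gamma_1-2)$, which is precisely the first inequality in Assumption~\textbf{(H5)}.

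The bound on $1-\Skap$ is obtained in the same way: $(1-\Skap)^{-1}\in L^\infty(0,T;L^{\lambda-2}(\Omega))$ by Lemma~\ref{lemma:first-bound}, and $(1-\Skap)^{-1}\in L^{2q_2'}(0,T;L^{6q_2'}(\Omega))$ with $q_2':=-\alpha_2=\beta_2/2-1>0$ by Lemma~\ref{lemma:second-bound}; the same interpolation yields $(1-\Skap)^{-1}\in L^{s'}(\Omega\times(0,T))$ with $s'=\tfrac13(2\lambda+3\beta_2-10)$, and $s'>\lambda$ is equivalent to $\lambda<3\beta_2-10$, the second inequality in Assumption~\textbf{(H5)}. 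It then suffices to choose any $p$ with $1<p<\min\{s/\gamma,\,s'/\lambda\}$ (a nonempty range by the two strict inequalities just established): for such $p$ both $(\Skap)^{-\gamma p}$ and $(1-\Skap)^{-\lambda p}$ lie in $L^1(\Omega\times(0,T))$ with bounds independent of $\kappa,\varepsilon$, and the displayed pointwise estimate for $a(\Skap)^{-p}$ concludes the proof.

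The only delicate point is the bookkeeping in the interpolation step: one must check that the exponent pair $(a_\theta,b_\theta)$ can be made diagonal ($a_\theta=b_\theta$) at some $\theta\in(0,1)$, and then that the resulting exponents $s$, $s'$ strictly exceed $\gamma$, $\lambda$ respectively. Carrying this out is exactly where the specific numerology of Assumption~\textbf{(H5)} enters; everything else is routine H\"older estimates on the bounded space--time cylinder.
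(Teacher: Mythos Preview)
Your proof is correct and follows essentially the same route as the paper: both reduce to showing $(\Skap)^{-\gamma p}$ and $(1-\Skap)^{-\lambda p}$ lie in $L^1(Q_T)$ and achieve this by combining the $L^\infty_t L^1_x$ bound of Lemma~\ref{lemma:first-bound} with the $L^2_t L^6_x$ bound of Lemma~\ref{lemma:second-bound} via H\"older/interpolation, with Assumption~\textbf{(H5)} providing exactly the numerology needed. The only cosmetic difference is that the paper writes the H\"older step with the explicit exponents $\Theta=2$, $\Theta_1=2/3$, $p_1=3$, $p_2=3/2$ (which amounts to taking $p=s/\gamma$ and $p=s'/\lambda$ respectively), whereas you phrase it as an interpolation between mixed Lebesgue spaces and locate the diagonal exponent $s=\tfrac{2}{3}(q_1+3q_2)$; the two computations are identical.
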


\begin{proof}
 For simplifying the notation we will write $S=S^{(\kappa)}$. We first notice that 
  \begin{align*}
    a(S)^{-p} = \left[ \frac{1}{S^\gamma} + \frac{1}{(1-S)^\lambda}\right]^p
    \leq C \left[ \frac{1}{S^{p\gamma}} + \frac{1}{(1-S)^{p\lambda}}\right].
  \end{align*}
  So it is sufficient to prove that 
  $S^{-p\gamma}$ and $(1-S)^{-p\lambda}$ 
  are uniformely bounded in $L^1(\Omega\times (0,T))$
  for some $p>1$. 

  It is clear that integrability given by Lemma~\ref{lemma:first-bound} is not 
  sufficient  to prove the estimate (\ref{Apr.11}). Therefore, we will combine estimates from Lemmas~\ref{lemma:second-bound}
  and \ref{lemma:first-bound} in order to obtain the integrability with requested exponents.
Assumptions {\bf (H5)} on the parameters 
$\beta_1$, $\beta_2$, $\gamma$, $\gamma_1$ and $\lambda$ imply
  \begin{equation}
    2<\beta_1\leq \gamma_1 <\gamma < \beta_1 + \gamma_1 -2, \quad 2 < \beta_2 \leq \lambda.
    \label{coeff:ineq:0}
  \end{equation}
 We rewrite the expression $\int_\Omega S^{-\gamma p} dx$ 
 using
 $ -\gamma p = \alpha_1 \Theta + (2-\gamma_1) \Theta_1$
 and H\"older's inequality:
\begin{align*}
 \int_\Omega S^{-\gamma p} dx & = \int_\Omega S^{\alpha_1 \Theta} \; S^{(2-\gamma_1)\Theta_1} dx 
    \leq  \Big( \int_\Omega  S^{\alpha_1 \Theta p_1} dx \Big)^{\frac{1}{p_1}} \; 
	\Big( \int_\Omega S^{(2-\gamma_1)\Theta_1 p_2 } dx \Big)^{\frac{1}{p_2}}.
\end{align*}
We take $p_1 = 6/\Theta$ and $p_2 = 6/(6-\Theta)$, $\Theta = 2$, $\Theta_1 = 2/3$ and we get
\begin{align*}
 \iint_{Q_T} S^{-\gamma p} dx dt  &\leq \int_0^T 
   \Big( \int_{\Omega} S^{6\alpha_1} dx \Big)^{1/3} dt \cdot 
   \max_{0\leq t \leq T} \Big( \int_{\Omega} S^{2-\gamma_1} dx \Big)^{2/3}\\
   &=\| S^{\alpha_1}\|_{L^2(0,T; L^6(\Omega))}^2 \| S^{2-\gamma_1}\|_{L^\infty(0,T; L^1(\Omega))}^{2/3}.
\end{align*}
Because of \eqref{Apr.10} and  \eqref{Apr.7}, the right hand side is uniformly bounded. Condition
\begin{align*}
 p = - \frac{1}{\gamma} \Big( \frac{10}{3} + \gamma - \frac{5}{3} \gamma_1 - \beta_1 \Big) > 1 
\end{align*}
is equivalent to
\begin{align}
  \gamma < \frac{1}{2} \beta_1 +  \frac{5}{6}(\gamma_1 -2).
 \label{Est.n3.1}
\end{align}
Now it is easy to see that \eqref{Est.n3.1} and the first inequality in \eqref{coeff:ineq:0} are equivalent to the first
inequality in Assumption {\bf (H5)}.

The second inequality in Assumption {\bf (H5)} in treated in the same way. 
The calculations are given here for completeness.
 We rewrite the expression $\int_\Omega (1-S)^{-\lambda p} dx$ 
 using
 $ -\lambda p = \alpha_2 \Theta + (2-\lambda) \Theta_1$
 and H\"older's inequality:
\begin{align*}
 \int_\Omega (1-S)^{-\lambda p} dx & = \int_\Omega (1-S)^{\alpha_2 \Theta} \; (1-S)^{(2-\lambda)\Theta_1} dx\\ 
    &\leq  \Big( \int_\Omega  (1-S)^{\alpha_2 \Theta p_1} dx \Big)^{\frac{1}{p_1}} \; 
	\Big( \int_\Omega (1-S)^{(2-\lambda)\Theta_1 p_2 } dx \Big)^{\frac{1}{p_2}}.
\end{align*}
We take $p_1 = 6/\Theta$ and $p_2 = 6/(6-\Theta)$, $\Theta = 2$, $\Theta_1 = 2/3$ and obtain
\begin{align*}
 \iint_{Q_T} (1-S)^{-\lambda p} dx dt  &\leq \int_0^T 
   \Big( \int_{\Omega} (1-S)^{6\alpha_2} dx \Big)^{1/3} dt \cdot 
   \max_{0\leq t \leq T} \Big( \int_{\Omega} (1-S)^{2-\lambda} dx \Big)^{2/3}\\
   &=\| (1-S)^{\alpha_2}\|_{L^2(0,T; L^6(\Omega))}^2 \| (1-S)^{2-\lambda}\|_{L^\infty(0,T; L^1(\Omega))}^{2/3}.
\end{align*}
Because of \eqref{Apr.10} and  \eqref{Apr.7}, the right hand side is uniformly bounded. Condition 
\begin{align*}
 p = - \frac{1}{\lambda} \Big( \frac{4}{3} - \frac{2}{3} \lambda  + 2- \beta_2 \Big) > 1 
\end{align*}
is equivalent to $\lambda < 3\beta_2 -10$. It is now easy to see that this inequality together with the second inequality in 
\eqref{coeff:ineq:0} are equivalent to the second
inequality in Assumption {\bf (H5)}. 
This concludes the proof of Lemma~\ref{lemma:third-bound}.
\end{proof}

\begin{proposition}\label{prop.12}
  There is an exponent $1<q<2$ such that 
  \begin{equation}
    \| \nabla  D_\kappa \beta(\Skap)\|_{L^q(\Omega\times (0,T)))} \leq C,
    \label{bound:3rdder}
  \end{equation}
  where $C$ is a constant   independent of $\kappa$ and $\varepsilon$.
\end{proposition}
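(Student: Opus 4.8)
The plan is to deduce \eqref{bound:3rdder} from the weighted $L^2$ bound \eqref{Apr.4} on $\sqrt{a(\Skap)}\,\nabla D_\kappa\beta(\Skap)$ by absorbing the degenerate weight $\sqrt{a(\Skap)}$, at the cost of integrability, using the negative-power estimate \eqref{Apr.11} from Lemma~\ref{lemma:third-bound}. The elementary starting point is the pointwise factorization
\[
|\nabla D_\kappa\beta(\Skap)| \;=\; \Big(\sqrt{a(\Skap)}\,|\nabla D_\kappa\beta(\Skap)|\Big)\cdot a(\Skap)^{-1/2},
\]
in which the first factor is bounded in $L^2(\Omega\times(0,T))$ by \eqref{Apr.4}, while the second factor belongs to a Lebesgue space quantified by \eqref{Apr.11}.

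Concretely, I would fix the exponent $p>1$ provided by Lemma~\ref{lemma:third-bound} and set $q=\frac{2p}{p+1}$, so that $1<q<2$ and $\frac{q}{2-q}=p$. Raising the factorization above to the power $q$, integrating over $\Omega\times(0,T)$, and applying H\"older's inequality with the conjugate exponents $\frac{2}{q}$ and $\frac{2}{2-q}$ gives
\[
\int_0^T\!\!\int_\Omega |\nabla D_\kappa\beta(\Skap)|^q\,dx\,dt
\;\le\;\Big(\int_0^T\!\!\int_\Omega a(\Skap)\,|\nabla D_\kappa\beta(\Skap)|^2\,dx\,dt\Big)^{q/2}
\Big(\int_0^T\!\!\int_\Omega a(\Skap)^{-p}\,dx\,dt\Big)^{(2-q)/2}.
\]
The first factor on the right is bounded by \eqref{Apr.4} and the second by \eqref{Apr.11}, both with constants independent of $\kappa$ and $\varepsilon$; this yields \eqref{bound:3rdder} with a $\kappa$- and $\varepsilon$-uniform constant, as claimed.

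As for the difficulty: essentially all the real work has already been done, the crucial nontrivial input being Lemma~\ref{lemma:third-bound}, i.e.\ that $a(\Skap)^{-p}$ is uniformly integrable for some exponent $p$ \emph{strictly} greater than $1$ --- and it is exactly this strict inequality (which encodes Assumption \textbf{(H5)}) that guarantees $q=2p/(p+1)>1$. Given that, the remaining step is a one-line H\"older argument and I foresee no obstacle; the only minor remark is that $a(S)\le 1$ on $(0,1)$, so $a(\Skap)^{-s}\le a(\Skap)^{-p}$ for every $s\le p$, meaning any $q\in(1,2p/(p+1)]$ works and one has a little slack in the choice of exponents.
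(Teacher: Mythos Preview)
Your proposal is correct and is essentially identical to the paper's own proof: both fix $p>1$ from Lemma~\ref{lemma:third-bound}, set $q=2p/(p+1)\in(1,2)$, and apply H\"older's inequality with exponents $2/q$ and $2/(2-q)$ to the factorization $|\nabla D_\kappa\beta(\Skap)|^q = a(\Skap)^{-q/2}\cdot a(\Skap)^{q/2}|\nabla D_\kappa\beta(\Skap)|^q$, concluding via \eqref{Apr.4} and \eqref{Apr.11}. Your additional remark about the slack in the choice of $q$ is a nice observation not present in the paper.
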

\begin{proof}
  Let $p>1$ as in Lemma~\ref{lemma:third-bound}. By choosing $q=2p/(1+p) \in (1,2)$, we get 
  \begin{align*}
    &\int_0^T \int_\Omega |\nabla  D_\kappa \beta(\Skap)|^q dx dt \\
    &=
    \int_0^T \int_\Omega {a(\Skap)}^{-q/2} {a(\Skap)}^{q/2}|\nabla  D_\kappa \beta(\Skap)|^q dx dt\\
    &\leq \left( \int_0^T \int_\Omega {a(\Skap)}^{-q/(2-q)} dx dt\right)^{(2-q)/2} 
      \left( \int_0^T \int_\Omega {a(\Skap)} |\nabla  D_\kappa \beta(\Skap)|^2 dx dt\right)^{q/2}\\
      &=  \left( \int_0^T \int_\Omega {a(\Skap)}^{-p} dx dt\right)^{(2-q)/2} 
      \left( \int_0^T \int_\Omega {a(\Skap)} |\nabla  D_\kappa \beta(\Skap)|^2 dx dt\right)^{q/2} .
  \end{align*}
  By using Lemma~\ref{lemma:third-bound} and bound (\ref{Apr.4}) we conclude the proof. 
\end{proof}

Finally, from equation \eqref{DisReg.tau.2.1} together with the the bounds \eqref{Apr.3}--\eqref{Apr.6}, we get the following uniform bound for the discrete time derivative:
\begin{align}\label{dicret.time.deriv}
 \|D_\kappa \Skap_i\|_{L^2(0,T; H^{-1}(\Omega))}\leq C_T.
\end{align}

\subsection{Passing to the limit when $\kappa \to 0$}

From \eqref{Apr.5} and Lemma \ref{lemma.3} we get that
\begin{align*}
\|\sqrt{\varepsilon}S_i^{(\kappa)}\|_{L^2(0,T; H^1(\Omega))} \leq C_T.
\end{align*}
From this and the bound from the discrete time derivative \eqref{dicret.time.deriv} we get by using the nonlinear version of the Aubin-Lions lemma \cite{CJL14} that
\begin{align*}
S_i^{\varepsilon,\kappa} \to S_i^{\varepsilon} \quad \mbox{strongly in}~~ L^2(0,T; L^2(\Omega)).
\end{align*}
This strong convergence holds also in $L^q(0,T; L^q(\Omega))$ for any $q<\infty$.

By using the bounds in Proposition \ref{prop.7} and Proposition \ref{prop.12}, we obtain that
the solution of \eqref{DisReg.tau.2.1} satisfies
 \begin{align}\nonumber
 \sum_{i=1}^n \int_0^T \langle \pa_t S^{(\varepsilon)}_i,
 \phi_i\rangle dt
 &+\sum_{i=1}^n \int_0^T\int_\Omega \Big[\frac{S^{(\varepsilon)}_i}{S^{(\varepsilon)}} \frac{a(S^{(\varepsilon)})}{\tau(S^{(\varepsilon)})} p_c'(S^{(\varepsilon)})\nabla \beta(S^{(\varepsilon)}) \cdot \nabla \phi_i  \\
  & +
  \frac{S^{(\varepsilon)}_i}{S^{(\varepsilon)}} a(S^{(\varepsilon)}) \nabla D_\kappa \beta(S^{(\varepsilon)}) \cdot \nabla \phi_i   \Big] dx dt \nonumber\\
  & + \int_0^T \int_{\Omega} \sum_{i,j=1}^n D_{ij}(S_i^{(\varepsilon)},\ldots,S_n^{(\varepsilon)})\nabla \mu_j^{(\kappa)} \cdot \nabla \phi_i dx dt \nonumber\\
  & + \eps \int_0^T \int_\Omega\sum_{i=1}^n \frac{S^{(\varepsilon)}_i}{S^{(\varepsilon)}} \nabla \wkap_i \cdot \nabla \phi_i dx dt = 0.
  \label{DisReg.tau.2.2}
 \end{align}
Thus, after taking the limit $\kappa\to 0$,  
\eqref{Apr.11} holds with $S^{(\varepsilon)}$ in place of $S^{(\kappa)}$, \textit{i.e.}
\begin{align}\label{est.75new}
 \int_0^T \int_\Omega a(S^{(\varepsilon)})^{-p} dx dt \leq C.
\end{align}
Also, estimates \eqref{Apr.0}-\eqref{Apr.6}, \eqref{bound:3rdder},
\eqref{dicret.time.deriv} hold with 
$S^{(\varepsilon)}$ in place of $S^{(\kappa)}$, \textit{i.e.}
\begin{align}
	{ \| \mathcal{E}(S^{(\varepsilon)}) \|_{L^{\infty}(0,T;L^1(\Omega))} } & \leq C, \label{Apr.0.new}\\
 \| \nabla \beta(S^{(\varepsilon)}) \|_{L^{\infty}(0,T;L^2(\Omega))} & \leq C, \label{Apr.1.new}\\
  \| \pa_t \beta(S^{(\varepsilon)}) \|_{L^{2}(0,T;L^2(\Omega))} & \leq C, \label{Apr.2.new}\\
   \| \sqrt{\tau(S^{(\varepsilon)}) p_c'(S^{(\varepsilon)})} \nabla S^{(\varepsilon)} \|_{L^{2}(0,T;L^2(\Omega))} & \leq C, \label{Apr.3.new}\\
   \| \sqrt{a(S^{(\varepsilon)})} \nabla  \pa_t  \beta(S^{(\varepsilon)}) \|_{L^{2}(0,T;L^2(\Omega))} & \leq C, \label{Apr.4.new}\\
  \| \sqrt{\varepsilon}  w_i^{\varepsilon} \|_{L^{2}(0,T;H^1(\Omega))} & \leq C, \label{Apr.5.new}\\
   \| \nabla(\Pi\mub^{\varepsilon})_i \|_{L^2(0,T;L^2(\Omega))} 
   &\leq C,    \label{Apr.6.new}\\
   \|\pa_t S_i^{(\eps)}\|_{L^2(0,T; H^{-1}(\Omega))} &\leq C,
   \label{9.1.19.dt}\\
   \|\nabla\pa_t\beta(S^{(\eps)})\|_{L^q(\Omega\times (0,T))}
   &\leq C, \label{9.1.19.dd}
\end{align}
for $i=1,\ldots,n$.

 \subsection{Passing to the limit $\varepsilon \to 0$}

Now we define the continuous mapping 
$\mathcal{R}: \R_+\times\R^{n}\to\R^{n+1}$ as
\[ \mathcal{R}_i(w_0,\overline{w}) = w_0 \frac{e^{\overline{w}_i}}{\sum_{j=1}^n e^{\overline{w}_j}},\qquad 
w_0\geq 0,~~\overline{w}\in\R^{n},~~ i=1,\ldots,n.\]
It follows from \eqref{Si.2} that
$S_i^{(\eps)} = \mathcal{R}_i( S^{(\eps)}, (\mu^*)^{(\eps)})$
for $i = 1,\ldots, n$. Lemma~\ref{Dreyer.2} implies that 
$S_i^{(\eps)}$ has a subsequence that
is strongly convergent in $L^1(\Omega \times (0,T))$, 
for $i = 1,\ldots, n$. 
From the $L^\infty$ bounds for $S_i^{(\eps)}$ 
we conclude that, up to
a subsequence, it holds that
\begin{align*}
 S_i^{(\varepsilon)} \to S_i \quad \mbox{strongly in }L^q(\Omega \times (0,T)) \quad \mbox{for all }q<\infty,~~ i = 1,\ldots, n.
\end{align*}
By using this convergence property as well as the bounds \eqref{est.75new}--\eqref{9.1.19.dd}, we are able to take the limit $\varepsilon \to 0$ in \eqref{DisReg.tau.2.2} 
and obtain that $\Sub = (S_1,\ldots,S_n)$ is a weak solution to 
\eqref{1}--\eqref{IC}.
This finishes the proof of the theorem.

\section*{Appendix}

\subsection*{Derivation of the model}

 We consider an isothermal, immiscible and incompressible two-phase flow of water and oil in a porous media, where oil consists of $n$ chemical components. Let us denote by $\mathcal{V}$ the representative volume (REV), which consists of the solid part $\mathcal{V}_s$ and the
 pore space $\mathcal{V}_p$. The flow occurs in a porous domain $\mathcal{V}_p$ of volume $\Delta V_p$, where the porosity (the relative volume occupied by the pores) is denoted by $\di \Phi = \Delta V_p/\Delta V$. 
 The saturations of the oil and water phase are given by
$  S_\alpha = \Delta V_\alpha/\Delta V_p$,
where $\Delta V_\alpha$ is the volume of the phase $\alpha$ with $\alpha=w,o$.
 Following \cite{BearBach90}, a generalized Darcy law gives
  \begin{align}
   \ub_w = -\lambda_w(S_o) k \nabla p_w,\quad  \ub_o = -\lambda_o(S_o) k \nabla p_o.
   \label{Darcy}
 \end{align}
Here the subscripts $w$ and $o$ correspond, respectively, to the water (wetting) and the oil (non-wetting) fluids,
$\ub_\alpha$ are the fluxes of the phases, $p_\alpha$ are their pressures, and $\lambda_\alpha$ are the phase mobilities. We assume that $\lambda_\alpha$ depend on the
nonwetting-phase saturation $S_o$.
Furthermore, $k$ is the absolute permeability of the porous medium,
and the gravity effects are neglected for simplicity.
 The mass conservation laws for both phases have the form:
\begin{align}
 \Phi \frac{\rho_o \partial S_o}{\partial t} + \dv \rho_o \ub_o  = 0, \quad
 \Phi \frac{\rho_w \partial(1-S_o)}{\partial t} + \dv  \rho_w \ub_w  = 0, \label{eq-1-2}
\end{align}
where $\Phi$ is the porosity of the medium.
The model \eqref{Darcy}--\eqref{eq-1-2} has to be completed with the capillary pressure law 
which has the form
\[ p_o - p_w = p_c^{\textrm{dyn}}, \]
where, due to \cite{HG93}, the capillary pressure saturation relationship is given by
\begin{align}
 p_c^{\textrm{dyn}} = p_c(S_o) + \tau(S_o) \frac{\partial S_o}{\partial t}.
 \label{HassGrey}
\end{align}
Here, $ p_c(S_o)$ is the static capillary pressure function and $\tau(S_o)$ is the relaxation parameter.

We assume that the non-wetting phase (oil) is a heterogeneous mixture of hydrocarbon compounds and we derive the mass conservation equation for each compound.
 More precisely, in the oil phase there are $n$ components whose
{\sl mass concentrations $c_o^i$}, \textit{i.e.}~the densities of the $i$-th component in the volume of the phase, are given by
$ c_o^i = \Delta m_o^i/\Delta V_o$,
where $\Delta m_o^i$ is the mass of the component $i$ in the oil-phase of the REV.
 The sum of the mass concentrations of all components is given by 
\begin{align}
  \sum_{i=1}^n  c_o^i = \frac{\Delta m_o}{\Delta V_o} = \rho_o.
  \label{Der.1}
\end{align}  
Noting that
\[ \frac{\Delta m_o^i}{\Delta V} =  \frac{\Delta V_p}{\Delta V} \frac{\Delta V_o}{\Delta V_p}  \frac{\Delta m_o^i}{\Delta V_o}=  \Phi S_o c_o^i, \]
 the mass conservation equation for the component $i$ is given by
 \begin{align}
   \frac{\partial}{\partial t}  \big( \Phi S_o c_o^i \big) +  \dv \big( c_o^i  \ub_{o,i}\big) = 0.
 \label{Conser.2}
 \end{align}
The component velocities $\ub_{o,i}$ are related to the phase velocity $\ub_o$ by the expression
 \begin{align}
  \rho_o \ub_o = \sum_{i=1}^n c_o^i \ub_{o,i}.
 \label{CompVel.1}
 \end{align}
 
%

 The flux of the oil-phase components consists of the relative movement of the constituents $i$ spreading due to random collisions between molecules of different types (diffusion) followed by the convection, \textit{i.e.}
 \begin{align}
  c_o^i  \ub_{o,i} =  \Jb_o^i + c_o^i \ub_o.
 \label{Diff.1}
\end{align}  
Note that $\sum_{i=1}^n \Jb_o^i = \Nullb$. 
Let us introduce the saturation of the component $i$ in the oil phase as
\[ S_o^i = \frac{\Delta V_o^i}{\Delta V_p}. \]
It is clear that $\sum_{i=1}^n S_o^i = S_o$. Furthermore, we assume that each component $i$ of the mixture in the oil phase is {\em incompressible}, \textit{i.e.}
\[ \Delta m_o^i = \rho_o^i \Delta V_o^i,\; \mbox{ where }\; \rho_o^i = \mbox{const.}\]
Now we have
\begin{align*}
 c_o^i = \frac{\Delta m_o^i}{\Delta V_o} = \frac{\rho_o^i\, \Delta V_o^i}{\Delta V_o}
    = \rho_o^i \frac{\Delta V_o^i/\Delta V_p}{\Delta V_o/\Delta V_p} = \rho_o^i \frac{S_o^i}{S_o}.
\end{align*}
 Next, we make the assumption that the diffusion fluxes are proportional to the spatial gradients of suitable chemical potentials, \textit{i.e.}
\begin{align}
 \Jb_o^i : = - \rho_o^i  \sum_{j=1}^n D_{ij} (S_o^1,\ldots,S_o^n) \nabla \mu_j, \qquad i=1,\ldots,n,
 \label{Diff.Flux}
\end{align}
where $\mu_j$ are given by \eqref{ChemPot} using the notation $S \equiv S_o$.
In this way, equation \eqref{Conser.2} reads:
\begin{align*}
   \rho_o^i \frac{\partial}{\partial t}  \Big( \Phi S_o \frac{S_o^i}{S_o} \Big) 
       +  \dv \Big( \rho_o^i \frac{S_o^i}{S_o}  \ub_{o} -  \rho_o^i \sum_{j=1}^n  D_{ij} (S_o^1,\ldots,S_o^n)  \nabla \mu_j \Big) = 0.
\end{align*}
%
Now, a simple calculation gives
\begin{align*}
 -\lambda_o k \nabla p_o 
   =  \frac{\lambda_o \lambda_w}{\lambda_o + \lambda_w} k \nabla ( p_w - p_o) + \frac{\lambda_o}{\lambda_o + \lambda_w}(\ub_0 + \ub_w).
\end{align*}

Furthermore, we assume that the total flow equals zero, \textit{i.e.} $\ub_0 + \ub_w = \bf{0}$, which gives   
\begin{align*}
 \lambda_o(S_o) \nabla p_o =  a(S_o) \nabla p_c^{\textrm{dyn}}.
\end{align*}
Here the diffusion mobility $a(S_o)$ is given by
\[  a(S_o) = \frac{\lambda_o(S_o) \lambda_w(S_o)}{\lambda_o(S_o) + \lambda_w(S_o)}.    \]

In this way, we obtain the parabolic system of our interest
\begin{align}
  \partial_t S_o^i - \dv \left(  \frac{S_o^i}{S_o} a(S_o) k \nabla p_c^{\textrm{dyn}} 
  + \sum_{j=1}^n D_{ij}(S_o^1,\ldots,S_o^n) \nabla \mu_j \right) = 0,
 \label{Eq_1}
\end{align}
where $i=1,\ldots,n$. Notice that \eqref{1} is identical to 
\eqref{Eq_1} with $k=1$ and $S_o$, $S_o^i$ replaced by $S$, $S_i$,
respectively.


\end{document}